\documentclass[12pt,a4paper]{amsart}

\usepackage{lmodern}
\usepackage{amsthm, amsmath, amssymb, amsfonts,mathrsfs}
\usepackage{mathtools}
\mathtoolsset{showonlyrefs=true}

\usepackage{cite}

\usepackage[a4paper,left=2cm,right=2cm,top=2cm,bottom=2cm]{geometry}
\usepackage{hyperref}

\hypersetup{
    colorlinks=true,
    linkcolor=blue,
    filecolor=magenta,
    urlcolor=cyan,
    citecolor=red
}

\newtheorem{theorem}{Theorem}[section]
\newtheorem{proposition}[theorem]{Proposition}
\newtheorem{lemma}[theorem]{Lemma}
\newtheorem{corollary}[theorem]{Corollary}

\theoremstyle{definition}
\newtheorem{definition}[theorem]{Definition}
\newtheorem{question}[theorem]{Question}
\newtheorem{remark}[theorem]{Remark}

\numberwithin{equation}{section}

\newcommand{\Ftwo}{\mathbb{F}_2}

\newcommand{\A}{A}

\newcommand{\mono}{\mathrm{Mono}}

\DeclareMathOperator{\pr}{pr}
\DeclareMathOperator{\Spec}{Spec}
\DeclareMathOperator{\im}{im}

\def\DD{D\kern-.7em\raise0.4ex\hbox{\char '55}\kern.33em}

\title[Local parity and motivic Peterson counterexamples]{Local Parity and Systematic Peterson Counterexamples in the Motivic Hit Problem}

\author{Ph\'uc V\~o \DD\d{\u a}ng}
\address{Department of Mathematics, FPT University, Quy Nhon AI Campus, An Phu Thinh New Urban Area, Vietnam}
\email{dangphuc150488@gmail.com, phucdv14@fpt.edu.vn}
\thanks{ORCID: \url{https://orcid.org/0000-0002-6885-3996}}

\keywords{Motivic cohomology, motivic Steenrod algebra, hit problem, Peterson conjecture, parity functional, algebraic transfer}
\subjclass[2020]{Primary 14F42, 55S10; Secondary 55S05, 55T15}

\begin{document}

\begin{abstract}
The motivic hit problem asks for a minimal set of module generators of
$H^{*,*}(BV_n;\Ftwo)$ over the mod~$2$ motivic Steenrod algebra. Kameko
proved that the motivic Peterson-type analogue of Wood's theorem fails by
constructing monomials $z_k$ which are not hit even when the corresponding
topological degree may satisfy $\beta(d)>n$. His proof passes to
$N_n=M_n/(\tau)$ and analyzes, in degree $d=k+2d_1$ with
$d_1=(n-1)(2^k-1)$, a distinguished summand whose basis consists of the
monotone translates of $z_k$. In this work, we isolate the local content of this summand
before quotienting by hit elements. More precisely, we construct a linear
projection
\[
 \vartheta:N_n^{d,*}\longrightarrow V,
\]
where $V$ is the $M_1$--summand spanned by the images of the monomials
$\sigma(z_k)$, and define a parity functional $\varepsilon:V\to\Ftwo$ by
summing the coefficients of these basis vectors. We prove that the local
image of the hit subspace is exactly the parity-zero hyperplane:
\[
 \vartheta\bigl(A^\sharp_+(N_n)\cap N_n^{d,*}\bigr)=\ker(\varepsilon).
\]
Consequently, every element whose local $M_1$--component has odd parity is
non-hit, and every odd-parity linear combination of the monotone translates
of $z_k$ determines a nonzero class in the motivic hit quotient. We also
obtain a systematic arithmetic family. For every integer $m\ge 3$, set
$n=2^r+1$ and $k=n-m$. If
\[
 r\ge m+\alpha(m-3),
\]
then the degree $d=(n-1)(2^{k+1}-2)+k$ satisfies $\beta(d)>n$. Hence, for
every fixed $m\ge 3$, these classes give infinitely many motivic
Peterson-type counterexamples with $k=n-m$. The local parity theorem holds
over every algebraically closed field of characteristic different from
$2$, and naturality under extension of the base field carries its non-hit
consequences to every field of characteristic different from $2$.
\end{abstract}

\maketitle

\section{Introduction}

Let $P_n=\Ftwo[x_1,\dots,x_n]$
be the graded polynomial algebra with $|x_i|=1$, regarded as a module over
the mod~$2$ Steenrod algebra $A$. If $A^+\subset A$ denotes the
positive-degree part, the classical hit problem asks for a basis of
\[
 QP_n^d=P_n^d/(A^+(P_n)\cap P_n^d).
\]
The fundamental vanishing theorem in this problem is Wood's proof
\cite{Wood} of the Peterson conjecture \cite{Peterson}. In the formulation
used by Kameko, one defines
\[
 \beta(d)=\min\left\{s\in\mathbb Z_{\ge 0}\mid
 d=(2^{i_1}-1)+\cdots+(2^{i_s}-1),\ i_j\in\mathbb Z_{\ge 0}\right\}.
\]
In \cite{Wood}, Wood proved that $\beta(d)>n$ implies $QP_n^d=0$. This result is one of the
basic inputs linking the hit problem with the cohomology of the Steenrod
algebra and with Singer's algebraic transfer \cite{Singer}. Later work has
developed the hit problem in several directions; see, for example, Ault
\cite{Ault}, Pengelley--Williams \cite{PengelleyWilliams}, and the two
monographs of Walker--Wood \cite{WalkerWoodI,WalkerWoodII}.

The motivic analogue begins with Voevodsky's construction \cite{Voevodsky} of Steenrod
operations in motivic cohomology. Over the complex
numbers, Kameko \cite{KamekoMotivic} considered
\[
 M_n(\mathbb C)=H^{*,*}(BV_n/\mathbb C;\Ftwo)
\]
and its module structure over the mod~$2$ motivic Steenrod algebra. In this
setting
\[
 M_n(\mathbb C)
 \cong
 \Ftwo[\tau,x_1,\dots,x_n,y_1,\dots,y_n]/
 (x_1^2+\tau y_1,\dots,x_n^2+\tau y_n),
\]
where $|\tau|=(0,1)$, $|x_i|=(1,1)$, and $|y_i|=(2,1)$. The motivic Steenrod algebra is generated by the
Bockstein $Q_0$ and by the reduced powers $\mathcal P^a$. Following
Kameko, let $A^{*,*}_+$ denote its homogeneous positive-degree part,
consisting of operations of bidegree $(p,q)$ with $p+q>0$, and write
$A^{*,*}_+(M_n)$ for the subspace generated by their values on $M_n$.
The corresponding hit quotient is
\[
 QM_n^{d,*}(\mathbb C)
 =M_n(\mathbb C)_{d,*}/
 \bigl(A^{*,*}_+(M_n(\mathbb C))\cap M_n(\mathbb C)_{d,*}\bigr).
\]
Kameko \cite{KamekoMotivic} formulated the motivic Peterson-type assertion
\[
 \beta(d)>n\quad\Longrightarrow\quad QM_n^{d,*}(\mathbb C)=0
\]
and proved that it is false.

For $1\le k<n$, Kameko \cite{KamekoMotivic} defines the monomial
\begin{equation}\label{eq:intro-zk}
 z_k=
 x_1\cdots x_k
 \left(\prod_{j=1}^{k}y_j^{2^k-2^{k-j}-1}\right)
 \left(\prod_{j=k+1}^{n}y_j^{2^k-1}\right),
\end{equation}
whose topological degree is
\begin{equation}\label{eq:intro-degree}
 d=(n-1)(2^{k+1}-2)+k.
\end{equation}
He proves that $z_k\notin A^{*,*}_+(M_n(\mathbb C))$ for every
$1\le k<n$. Moreover, if $k=n-3$ and $\alpha(n-2)\ge 3$, where
$\alpha(t)$ denotes the number of $1$'s in the binary expansion of $t$,
then $\beta(d)>n$; hence $QM_n^{d,*}(\mathbb C)\ne 0$ despite the
Peterson-type inequality. Kameko's proof passes to
\[
 N_n(\mathbb C)=M_n(\mathbb C)/(\tau)
 \cong \Lambda(x_1,\dots,x_n)\otimes \Ftwo[y_1,\dots,y_n]
\]
and to $A^\sharp=A^{*,*}/(\tau)$. In degree $d=k+2d_1$, where
$d_1=(n-1)(2^k-1)$, he constructs a basis for a top part of
\[
 \Lambda_n^k\otimes (Y_n/G_n)^{2d_1}
\]
which splits into two blocks $M_0$ and $M_1$. The block $M_1$ is precisely
the set of monotone translates of $z_k$. The image of $Q_0$ in this block
is controlled by pairwise sums of those translates. The present paper
makes this local statement exact by defining a projection on the raw
degree--$d$ component of $N_n$, before dividing by the hit subspace.

The main construction is a linear map
\[
 \vartheta:N_n^{d,*}\longrightarrow V,
\]
where $V$ denotes the $\Ftwo$--span of the images of the $M_1$--basis
elements, together with a parity functional $\varepsilon:V\to\Ftwo$. The
central theorem is
\[
 \vartheta\bigl(A_+^\sharp(N_n)\cap N_n^{d,*}\bigr)=\ker(\varepsilon).
\]
This gives the exact local quotient
\[
 V/\vartheta\bigl(A_+^\sharp(N_n)\cap N_n^{d,*}\bigr)\cong\Ftwo.
\]
It is important that this is a statement about the local top-layer image
of hit elements. It is not the assertion that the even-parity vectors in
$V$ are exactly those vectors which are themselves hit inside
$N_n^{d,*}$. Rather, an even-parity vector is precisely a possible local
$M_1$--component of a hit element, whereas an odd-parity vector cannot
occur as such a component.

The obstruction is intrinsic to the reduction modulo $\tau$. In $N_n$ the
classes $x_i$ are exterior and $Q_0$ lowers exterior degree by one, so the
top-layer projection separates the pairwise $Q_0$--relations from all
reduced-power contributions. Under classical realization one instead sets
$\tau=1$ and $y_i=x_i^2$; the exterior grading and the local projection
then disappear. Thus the parity obstruction does not descend to the
classical hit quotient and does not conflict with Wood's theorem \cite{Wood}. A
precise comparison is given in Remark~\ref{rem:classical-comparison}.

The arithmetic consequence is systematic. Fix $m\ge 3$ and put
\[
 n=2^r+1,\qquad k=n-m.
\]
A direct binary calculation gives
\[
 \alpha(d+n)=r+n-m+1-\alpha(m-3).
\]
It follows that $\beta(d)>n$ whenever
\[
 r\ge m+\alpha(m-3).
\]
Combining this calculation with the local parity theorem gives, for every
fixed $m\ge 3$, an infinite family of motivic Peterson-type
counterexamples with $k=n-m$. The case $m=3$ contains a subfamily of
Kameko's original examples, while $m=4$ recovers the family
$n=2^r+1$, $k=n-4$, $r\ge 5$.

The classical hit problem is also an input to Singer's algebraic transfer.
Kameko explicitly identifies the construction of motivic counterparts of
the transfer as a natural problem \cite{KamekoMotivic}. Gregersen and
Rognes have since constructed motivic Singer constructions and proved that
their evaluation maps are $\operatorname{Ext}$--equivalences
\cite{GregersenRognes}. These results do not by themselves furnish an
algebraic transfer whose source is the motivic hit quotient. Moreover, the
individual odd-parity classes constructed here are not asserted to be
$GL_n(\Ftwo)$--invariant. Section~\ref{sec:parity} therefore formulates a
transfer question only after taking the relevant invariant subquotient.
The motivic Adams-theoretic target is understood in the sense developed by
Dugger and Isaksen \cite{DuggerIsaksen}.

The paper is organized as follows. Section~\ref{sec:preliminaries} establishes the algebraic framework, reviewing Kameko's weight filtration and the top-layer monomial basis of the quotient $N_n$. Section~\ref{sec:parity} is devoted to the core structural results: we construct explicit Bockstein preimages for the Johnson graph, prove the exact parity theorem, and discuss its implications for both classical realization and the algebraic transfer. Finally, Section~\ref{sec:base-change} extends the parity obstruction to general base fields via base-change naturality, and carries out the binary arithmetic to establish the infinite families of Peterson-type counterexamples for $k=n-m$.

\section{Preliminaries}
\label{sec:preliminaries}

This section fixes notation and recalls the precise ingredients from Kameko's
work which enter the local argument. The emphasis is on the quotient
$N_n=M_n/(\tau)$, the subspace $G_n\subset Y_n$, and the top-layer basis
$\pi(M_0\cup M_1)$ in degree $d=k+2d_1$.

\subsection{The functions \texorpdfstring{$\alpha$ and $\beta$}{alpha and beta}}

For a non-negative integer $t$, let $\alpha(t)$ be the number of $1$'s in the
binary expansion of $t$. Kameko proves the following equivalence, which is the
form of Wood's numerical criterion used below.

\begin{proposition}[Kameko \cite{KamekoMotivic}]\label{prop:alpha-beta}
For positive integers $d$ and $n$, one has
\[
 \beta(d)>n\quad\Longleftrightarrow\quad \alpha(d+n)>n.
\]
\end{proposition}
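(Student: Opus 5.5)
The plan is to reduce both sides to a statement about writing $d+n$ as a sum of powers of $2$. The key observation is that $d=\sum_{j=1}^{s}(2^{i_j}-1)$ is equivalent to $d+s=\sum_{j=1}^{s}2^{i_j}$. Hence $\beta(d)$ is the least $s\ge 0$ such that $d+s$ is a sum of exactly $s$ powers of $2$, where the exponents $i_j=0$ are allowed and repetitions are allowed. I will use the standard fact: a positive integer $N$ is a sum of exactly $s$ powers of $2$ if and only if $\alpha(N)\le s\le N$. The lower bound $\alpha(N)\le s$ holds because binary carrying can only decrease the number of summands when we merge equal terms. Conversely, starting from the binary expansion, we can split any $2^i$ with $i\ge1$ into $2^{i-1}+2^{i-1}$, which increases the count by one at each step until all summands equal $1$, that is, until the count reaches $N$.

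For the direction ($\Leftarrow$), assume $\alpha(d+n)>n$ and suppose, for contradiction, that $\beta(d)=s\le n$. Then $d+s$ is a sum of $s$ powers of $2$. Adding $n-s$ copies of $2^0=1$ shows that $d+n$ is a sum of $n$ powers of $2$. By the fact above, $\alpha(d+n)\le n$, which is a contradiction. This direction is formal.

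For the direction ($\Rightarrow$), argue by contraposition: assume $\alpha(d+n)\le n$ and produce some $s\le n$ with $d+s$ a sum of $s$ powers of $2$. Since $d\ge1$ we have $n\le d+n$, so the fact gives $d+n=\sum_{j=1}^{n}2^{i_j}$, and therefore $d=\sum_{j=1}^{n}(2^{i_j}-1)$. This shows $\beta(d)\le n$. Summands with $i_j=0$ contribute $0$, so they may be dropped; this is consistent with the definition of $\beta$ as a minimum over all $s$. Hence $\beta(d)\le n$, which is the contrapositive.

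The only step needing care is the splitting argument in the standard fact, specifically checking that we can reach exactly $n$ summands. Each split raises the number of summands by one, starting from $\alpha(d+n)$ and ending at $d+n$ when every summand is $1$. So every intermediate count is attained, and the hypothesis $\alpha(d+n)\le n\le d+n$ suffices. Everything else is bookkeeping, so I expect no real obstacle. If one prefers to avoid splitting, an alternative is induction on $n-\alpha(d+n)$.
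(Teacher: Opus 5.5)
Your proof is correct. Rewriting $d=\sum_{j=1}^{s}(2^{i_j}-1)$ as $d+s=\sum_{j=1}^{s}2^{i_j}$, and using that a positive integer $N$ is a sum of exactly $s$ powers of $2$ if and only if $\alpha(N)\le s\le N$, settles both directions. The lower bound comes from merging equal powers, and splitting one power at a time realizes every intermediate count. Padding with copies of $2^0$ supplies the upward closure in $s$ that the direction $\alpha(d+n)>n\Rightarrow\beta(d)>n$ needs.

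The paper does not prove this proposition; it quotes it from Kameko. Your argument is the standard elementary proof of this equivalence, so the paper offers no competing route to compare against.
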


\subsection{The quotient \texorpdfstring{$N_n$}{Nn} and the reduced power subalgebra}

Over $\mathbb C$, Kameko's presentation gives
\[
 M_n=M_n(\mathbb C)
 \cong
 \Ftwo[\tau,x_1,\dots,x_n,y_1,\dots,y_n]/(x_1^2+\tau y_1,\dots,x_n^2+\tau y_n).
\]
We write
\[
 N_n=M_n/(\tau)=\Lambda_n\otimes Y_n,
\qquad
 \Lambda_n=\Lambda(x_1,\dots,x_n),
\qquad
 Y_n=\Ftwo[y_1,\dots,y_n].
\]
The quotient motivic Steenrod algebra is denoted
\[
 A^\sharp=A^{*,*}/(\tau),
\]
and its homogeneous positive-degree part is denoted by $A^\sharp_+$. It is
generated by $Q_0$ and by the reduced powers $\mathcal P^a$, $a\ge 1$.
In $N_n$ one has
\[
 Q_0(x_i)=y_i,\qquad Q_0(y_i)=0,
\]
and
\[
 \mathcal P^a(x_i)=0\quad(a\ge 1),\qquad
 \mathcal P^1(y_i)=y_i^2,
 \qquad
 \mathcal P^a(y_i)=0\quad(a\ge 2).
\]
After quotienting by $(\tau)$ the Cartan formula for the reduced powers has no
additional $\tau$--term. Consequently, for $\lambda\in\Lambda_n$ and
$y\in Y_n$,
\begin{equation}\label{eq:P-on-tensor}
 \mathcal P^a(\lambda y)=\lambda\,\mathcal P^a(y)\qquad(a\ge 1).
\end{equation}
Let $P\subset A^\sharp$ be the subalgebra generated by the reduced powers, and
let $P^+$ be its positive-degree part.

For a monomial
\[
 z=x_1^{\varepsilon_1}\cdots x_n^{\varepsilon_n}y_1^{e_1}\cdots y_n^{e_n}
 \in N_n,
\]
where $\varepsilon_i\in\{0,1\}$, write
\[
 \varepsilon_i+2e_i=\sum_{j\ge 0}\alpha_{ij}(z)2^j,
 \qquad \alpha_{ij}(z)\in\{0,1\},
\]
and set
\[
 \alpha_i(z)=\sum_{j\ge 0}\alpha_{ij}(z),
 \qquad
 \omega_j(z)=\sum_{i=1}^n\alpha_{ij}(z),
 \qquad
 u_j(z)=(\alpha_{1j}(z),\dots,\alpha_{nj}(z)).
\]
For fixed $k$, Kameko defines $G_n\subset Y_n$ to be the subspace spanned by
$P^+(Y_n)$ and by those monomials $y\in Y_n$ satisfying
\[
 (\omega_1(y),\dots,\omega_k(y))<(n-1,\dots,n-1)
\]
in lexicographic order. Thus
\begin{equation}\label{eq:Pplus-in-G}
 P^+(Y_n)\subset G_n.
\end{equation}
The map $\psi:Y_n\to P_n$, $y_i\mapsto x_i$, identifies the quotient
$(Y_n/G_n)^{2b}$ with Kameko's corresponding classical quotient in degree $b$.

\subsection{Kameko's top-layer basis}

Fix integers $n\ge 2$ and $1\le k<n$. Put
\[
 d_1=(n-1)(2^k-1),
 \qquad
 d=k+2d_1=(n-1)(2^{k+1}-2)+k.
\]
Let
\[
 \pi:\Lambda_n^k\otimes Y_n^{2d_1}\longrightarrow
 \Lambda_n^k\otimes (Y_n/G_n)^{2d_1}
\]
be the quotient map. Kameko constructs two sets of monomials
$M_0,M_1\subset \Lambda_n^k\otimes Y_n^{2d_1}$ such that
\begin{equation}\label{eq:Kameko-basis}
 \pi(M_0\cup M_1)
\end{equation}
is a basis of $\Lambda_n^k\otimes (Y_n/G_n)^{2d_1}$.
Moreover,
\begin{equation}\label{eq:M1}
 M_1=\{\sigma(z_k)\mid \sigma\in\mono(k)\},
\end{equation}
where $\mono(k)$ is the set of increasing maps
$\{1,\dots,k\}\to\{1,\dots,n\}$. Explicitly, if
$\sigma(1)<\cdots<\sigma(k)$, then
\begin{equation}\label{eq:sigma-zk}
 \sigma(z_k)=
 x_{\sigma(1)}\cdots x_{\sigma(k)}
 \left(\prod_{j=1}^{k}y_{\sigma(j)}^{2^k-2^{k-j}-1}\right)
 \left(\prod_{i\notin \im(\sigma)}y_i^{2^k-1}\right).
\end{equation}
We write
\[
 U_0=\langle\pi(m)\mid m\in M_0\rangle,
 \qquad
 V=\langle\pi(m)\mid m\in M_1\rangle.
\]
Then \eqref{eq:Kameko-basis} gives the direct sum
\begin{equation}\label{eq:U0V}
 \Lambda_n^k\otimes (Y_n/G_n)^{2d_1}=U_0\oplus V.
\end{equation}

We shall use the following form of Kameko's Bockstein calculation.

\begin{proposition}[Kameko \cite{KamekoMotivic}, Proposition~5.1]\label{prop:Kameko-Q0}
Let $z\in N_n^{d-1,*}$ be a monomial with $\omega_0(z)=k+1$. Then
\[
 \pi(Q_0z)\in \Lambda_n^k\otimes (Y_n/G_n)^{2d_1}
\]
is a linear combination of elements $\pi(m)$ with $m\in M_0$ and elements
of the form
\[
 \pi(\sigma_1(z_k)+\sigma_2(z_k)),
 \qquad \sigma_1,\sigma_2\in\mono(k).
\]
\end{proposition}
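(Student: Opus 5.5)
The plan is to reduce the statement to a parity count on the $M_1$--coordinates, and then to prove that parity is even by a pairing argument on the terms of $Q_0z$. First I record the bookkeeping. Write $z=x_{i_0}x_{i_1}\cdots x_{i_k}\,y^e$ with $I=\{i_0,\dots,i_k\}$, $|I|=k+1$. Degree counting gives $2|e|=d-1-(k+1)=2d_1-2$. Since $Q_0$ is a derivation with $Q_0x_i=y_i$ and $Q_0y_i=0$,
\[
 Q_0z=\sum_{t=0}^{k} x_{I\setminus\{i_t\}}\,y_{i_t}\,y^e ,
\]
and each summand is a monomial of $\Lambda_n^k\otimes Y_n^{2d_1}$. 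By \eqref{eq:U0V} we may write $\pi(Q_0z)=u+v$ with $u\in U_0$ and $v\in V$. The subspace of $V$ spanned by the pairwise sums $\pi(\sigma_1(z_k)+\sigma_2(z_k))$ is exactly the set of vectors whose coordinates in the basis $\pi(M_1)$ have even sum. So the proposition is equivalent to the claim that the sum of the $\pi(M_1)$--coordinates of $v$ is $0\in\Ftwo$.

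Next I would localize the coordinates. For a $k$-subset $J$, write $\sigma_J\in\mono(k)$ for the increasing map with image $J$. For a monomial $x_Jy^f$, the element $\pi(x_Jy^f)$ lies in $x_J\otimes (Y_n/G_n)^{2d_1}$. Since $\pi(M_0\cup M_1)$ is a basis respecting the exterior factor, the only $M_1$--coordinate of $\pi(x_Jy^f)$ that can be nonzero is the one at $\sigma_J(z_k)$. Call this coordinate $c_J(f)\in\Ftwo$; it is the coefficient of $\pi(y^{f_J})$ in the class of $y^f$ in $(Y_n/G_n)^{2d_1}$, where $y^{f_J}$ is the $y$--part of $\sigma_J(z_k)$. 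The statement then becomes
\[
 \sum_{t=0}^{k} c_{I\setminus\{i_t\}}\bigl(e+\delta_{i_t}\bigr)=0 .
\]

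To evaluate $c_J$, I use the definition of $G_n$. A monomial $y^f$ with $(\omega_1,\dots,\omega_k)(y^f)<(n-1,\dots,n-1)$ lies in $G_n$, so $c_J(f)=0$ for such $f$. Since the total degree is $d_1=(n-1)(2^k-1)$, a surviving monomial must have $\omega_j=n-1$ for $1\le j\le k$ and nothing above. Via $\psi$, this is Kameko's classical ``minimal spike'' situation: each binary column contains exactly one zero. Monomials of this shape are congruent modulo $P^+(Y_n)\subset G_n$ to a computable combination of admissible ones, via the usual $\chi$--trick / $\mathcal P^{2^j}$ relations. I would apply this only to the $k+1$ monomials $y_{i_t}y^e$. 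In each, $e$ has column weights forced to be $n-1$ except at the single position where adding $y_{i_t}$ produces a carry.

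The main obstacle is this last computation: showing that the nonzero $c_{I\setminus\{i_t\}}(e+\delta_{i_t})$ occur an even number of times. My first attempt would be a pairing. By \eqref{eq:P-on-tensor}, both $G_n$ and the basis are stable under permutations of the variables. For indices $i_s,i_t\in I$ whose exponents in $e$ agree in the bits $1,\dots,k$, the transposition $(i_s\,i_t)$ swaps the corresponding two terms and their coordinates, so those contributions cancel in pairs. In the remaining unmatched case, where the exponents of $e$ at $i_s$ and $i_t$ differ, I expect the carry analysis to show that the sum $y_{i_t}y^e$ falls into $G_n$, so that $c=0$. If the pairing fails in some configuration, I would instead compare directly with Kameko's explicit computation of $Q_0$ on his weight vectors.
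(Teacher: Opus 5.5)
The paper gives no proof of this statement; it is imported from Kameko's Proposition~5.1, so your argument has to stand on its own. Your reduction is correct. It suffices to show that the $M_1$--coordinates of $\pi(Q_0z)$ have even sum. Each term $x_{I\setminus\{i_t\}}y_{i_t}y^e$ can only contribute to the coordinate of $\sigma_{I\setminus\{i_t\}}(z_k)$. A surviving $y^f$ must have exactly one zero in each binary column $0,\dots,k-1$ of its exponents.

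The gap is the decisive parity step, and both mechanisms you propose for it fail.

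First, the basis $\pi(M_0\cup M_1)$ is \emph{not} stable under permutations of the variables. The identity $\mathcal P^a(\lambda y)=\lambda\mathcal P^a(y)$ says nothing about permutations. $M_1$ is defined through increasing maps, with the zero of $y_{\sigma(j)}$ placed in binary position $k-j$. For $n=3$, $k=2$, the transposition $(1\,2)$ sends $x_1x_2y_1y_2^2y_3^3\in M_1$ to $x_1x_2y_1^2y_2y_3^3$. That monomial is congruent to the former modulo $\mathcal P^1(y_1y_2y_3^3)$ and lower weight, so it cannot lie in $M_0\cup M_1$. The paper itself stresses that the splitting $U_0\oplus V$ is order-sensitive. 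So a transposition does not, a priori, carry one $M_1$--coordinate to another.

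Second, in your ``unmatched'' case the terms need not lie in $G_n$. Take $n=3$, $k=2$ and $z=x_1x_2x_3y_1^2y_3^3$. Then $Q_0z=x_2x_3y_1^3y_3^3+x_1x_3y_1^2y_2y_3^3+x_1x_2y_1^2y_3^4$. The first two terms come from rows with $e_1\ne e_2$, have $(\omega_1,\omega_2)=(2,2)$, and are nonzero modulo $G_n$. Their $M_1$--coordinates do vanish, but for a different reason: their zero patterns do not match their exterior supports. For instance, the second term is congruent to $x_1x_3\,y_1y_2^2y_3^3$, whose pattern belongs to the support $\{1,2\}$. They vanish on $V$ for that reason, not because they lie in $G_n$.

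What is missing is an intrinsic description of the coordinate $c_J$.
\begin{enumerate}
\item Encode a surviving $y^f$ by $g_f:\{0,\dots,k-1\}\to\{1,\dots,n\}$, the row of the unique zero in each column.
\item By Lucas' theorem, modulo lower-weight monomials, $\mathcal P^{2^j}(y^u)$ contributes only through terms $y^u y_i^{2^j}$. Any other distribution of $2^j$ produces two zeros in some column.
\item A carry analysis then shows that the resulting relations are exactly $g\equiv g\circ(j\;j{+}1)$ when $g(j)\ne g(j+1)$, together with relations between two non-injective patterns.
\item Hence all injective patterns with image $J$ are congruent, and the indicator $\mu_J(y^f)=[\,g_f\text{ injective with image }J\,]$ descends to $(Y_n/G_n)^{2d_1}$.
\item The monomial basis for support $J$ contains exactly one injective pattern with image $J$, namely that of $\sigma_J(z_k)$. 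Therefore $c_J=\mu_J$.
\end{enumerate}

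The count is then immediate. The monomial $e+\delta_t$ has the required shape only when $t$ is one of the two rows carrying the zeros of the lowest column of $e$ that contains a zero; call the other row $s$. The relevant $\mu$--values vanish unless that column is column $0$ and $t,s\in I$. In that case $\mu_{I\setminus\{t\}}(e+\delta_t)=\mu_{I\setminus\{s\}}(e+\delta_s)$.

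So the correct pairing is by the zeros of the lowest column, not by equality of exponents. The symmetry you need is a property of the functional $c_J$ that must be proved, not read off from the basis. Without this computation, or a direct appeal to Kameko's, as the paper makes, the proposal does not establish the statement.
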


The reverse inclusion in the parity theorem requires explicit control of
which pairwise sums occur. This is supplied by a direct construction in
Lemma~\ref{lem:adjacent-realization}.

\section{A parity functional on Kameko's local top layer}
\label{sec:parity}

This section proves the structural result. The point is to keep the degree--$d$
component of $N_n$ before quotienting by hits, project only to Kameko's local
$M_1$--summand, and then compute exactly which vectors in that summand can arise
from hit elements.

\subsection{The local projection and the parity functional}

Since
\[
 N_n^{d,*}=\bigoplus_{a+2b=d}\Lambda_n^a\otimes Y_n^{2b},
\]
there is a canonical projection
\[
 \pr_k:N_n^{d,*}\longrightarrow \Lambda_n^k\otimes Y_n^{2d_1}.
\]
Let
\[
 p_{M_1}:\Lambda_n^k\otimes (Y_n/G_n)^{2d_1}\longrightarrow V
\]
be the projection with kernel $U_0$ under the direct sum decomposition
\eqref{eq:U0V}.

\begin{definition}\label{def:vartheta}
The local $M_1$--projection is the linear map
\[
 \vartheta=p_{M_1}\circ\pi\circ\pr_k:N_n^{d,*}\longrightarrow V.
\]
\end{definition}

Choose an enumeration
\[
 \mono(k)=\{\sigma_1,\dots,\sigma_N\},
 \qquad N=\binom nk,
\]
and put $m_i=\sigma_i(z_k)$. Since $\{\pi(m_1),\dots,\pi(m_N)\}$ is a
basis of $V$, every $v\in V$ is written uniquely in the form
\[
 v=\sum_{i=1}^N c_i\pi(m_i),
 \qquad c_i\in\Ftwo.
\]

\begin{definition}\label{def:epsilon}
The parity functional is
\[
 \varepsilon:V\longrightarrow \Ftwo,
 \qquad
 \varepsilon\left(\sum_{i=1}^Nc_i\pi(m_i)\right)=\sum_{i=1}^Nc_i.
\]
\end{definition}

Thus $\varepsilon$ records the parity of the number of $M_1$--basis vectors
which occur with nonzero coefficient.

\subsection{Even parity and pairwise sums}

\begin{lemma}\label{lem:even-span}
Let $W$ be an $N$--dimensional vector space over $\Ftwo$ with basis
$e_1,\dots,e_N$, where $N\ge 2$. Let $E\subset W$ be the subspace spanned by all
vectors $e_i+e_j$ with $i\ne j$. Then
\[
 E=\left\{\sum_{i=1}^N a_ie_i\mid \sum_{i=1}^N a_i=0\right\}.
\]
\end{lemma}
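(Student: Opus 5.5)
I will prove the two inclusions separately, treating the parity map
\[
 s:W\to\Ftwo,\qquad s\Bigl(\sum_{i=1}^N a_ie_i\Bigr)=\sum_{i=1}^N a_i .
\]
This map is linear. Write $H=\ker(s)$ for the right-hand side of the statement.

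For the inclusion $E\subseteq H$, note that each generator $e_i+e_j$ with $i\ne j$ has exactly two nonzero coefficients. Hence $s(e_i+e_j)=1+1=0$ in $\Ftwo$. Since $s$ is linear and $H=\ker(s)$ is a subspace, the span $E$ of these generators lies in $H$.

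For the reverse inclusion $H\subseteq E$, I would give an explicit decomposition instead of counting dimensions. Take $v=\sum_i a_ie_i$ with $\sum_i a_i=0$. I claim
\[
 v=\sum_{i=2}^N a_i\,(e_1+e_i).
\]
Indeed, expanding the right-hand side gives $\bigl(\sum_{i\ge 2}a_i\bigr)e_1+\sum_{i\ge2}a_ie_i$. The parity condition gives $\sum_{i\ge2}a_i=a_1$, so the right-hand side is exactly $v$. Each $e_1+e_i$ with $i\ge 2$ is one of the spanning vectors of $E$, and this is where $N\ge 2$ is used. Therefore $v\in E$.

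As a consistency check, $s$ is surjective because $s(e_1)=1$, so $\dim H=N-1$. The vectors $e_1+e_i$ for $2\le i\le N$ are linearly independent, since each involves the distinct basis vector $e_i$. This confirms that they form a basis of $E=H$. No step is a genuine obstacle. The only point requiring care is using the parity hypothesis to match the $e_1$-coefficient.
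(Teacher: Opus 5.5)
Your proof is correct and is the same as the paper's. Both use the coordinate-sum check for $E\subseteq\ker(s)$, and both get the reverse inclusion from the explicit decomposition $v=\sum_{i=2}^N a_i(e_1+e_i)$ with $a_1=\sum_{i\ge 2}a_i$; your closing dimension count is a harmless extra the paper omits.
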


\begin{proof}
Each generator $e_i+e_j$ has coordinate sum $0$, so $E$ is contained in the
right-hand side. Conversely, suppose $w=\sum_{i=1}^N a_ie_i$ and
$\sum_i a_i=0$. Then $a_1=\sum_{i=2}^N a_i$, and hence
\[
 w=a_1e_1+\sum_{i=2}^N a_ie_i
 =\sum_{i=2}^N a_i(e_1+e_i),
\]
because the ground field is $\Ftwo$. Therefore $w\in E$.
\end{proof}

\subsection{The local \texorpdfstring{$Q_0$}{Q0}--image}

We identify the basis vector $\pi(\sigma_i(z_k))\in V$ with $e_i\in W$.
Equivalently, we may index the basis by $k$--subsets of $\{1,\dots,n\}$.
For a $k$--subset $I=\{i_1<\cdots<i_k\}$, let
$\sigma_I\in\mono(k)$ be given by $\sigma_I(j)=i_j$, and write
$z_I=\sigma_I(z_k)$.

\begin{lemma}\label{lem:adjacent-realization}
Let $I$ and $J$ be adjacent vertices of the Johnson graph $J(n,k)$, that is,
$|I|=|J|=k$ and $|I\cap J|=k-1$. Then there exists a monomial
$w_{I,J}\in N_n^{d-1,*}$ such that
\[
 \vartheta(Q_0w_{I,J})=\pi(z_I)+\pi(z_J).
\]
\end{lemma}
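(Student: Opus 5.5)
Write $K=I\cap J$, so that $I=K\cup\{a\}$ and $J=K\cup\{b\}$ with $a\neq b$ and $|K|=k-1$. The plan is to take $w_{I,J}$ to be a single monomial whose exterior part is $x_Kx_ax_b$, which has $\omega_0=k+1$. Since $\mathcal P^a$ commutes with exterior factors by \eqref{eq:P-on-tensor}, only $Q_0$ matters here, and $Q_0$ acts as a derivation replacing one $x_c$ by $y_c$. So for a $y$-monomial $y^e$,
\[
 Q_0(x_Kx_ax_by^e)=x_Kx_a\,y_by^e+x_Kx_b\,y_ay^e+\sum_{c\in K}x_{K\setminus\{c\}}x_ax_b\,y_cy^e .
\]
The terms in the last sum have exterior degree $k$ but exterior support not of the form $I$ or $J$. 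A degree count shows every term lies in $\Lambda_n^k\otimes Y_n^{2d_1}$ exactly when $y^e$ has $y$-degree $d_1-1$. In that case $\pr_k$ acts as the identity on $Q_0w$, and $\vartheta(Q_0w)=p_{M_1}\pi(Q_0w)$.

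The concrete choice is $y^e=(\text{$y$-part of }z_I)/y_b$. Since $b\notin I$, the exponent of $y_b$ in $z_I$ is $2^k-1\ge 1$, so this is a genuine monomial. With this choice the first term is exactly $z_I$, so it contributes $\pi(z_I)$ to $\vartheta$. By Proposition~\ref{prop:Kameko-Q0}, $\pi(Q_0w)$ lies in $U_0$ plus the span of the pairwise sums $\pi(\sigma_1(z_k)+\sigma_2(z_k))$. Hence $\vartheta(Q_0w)$ has even parity, and the remaining terms contribute an odd number of $M_1$-basis vectors. It therefore remains to identify their $M_1$-component precisely.

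\textbf{The cross terms.} For each $c\in K$, I would show that $\pi$ of the term indexed by $c$ lies in $U_0$, or is zero in $Y_n/G_n$. The term has $y$-part $y_cy^e$, in which $y_c$ carries exponent $2^k-2^{k-p}$ (with $p$ the position of $c$ in $I$), while $y_a$ still carries a non-full exponent of the form $2^k-2^{k-q}-1$. I expect its weight vector $(\omega_1,\dots,\omega_k)$ either to drop lexicographically below $(n-1,\dots,n-1)$, so that it lies in $G_n$, or to fall on a monomial of Kameko's $M_0$-type. This uses the explicit binary expansions of $2^k-2^{k-j}-1$ and is the same bookkeeping Kameko performs in proving Proposition~\ref{prop:Kameko-Q0}.

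\textbf{The main obstacle.} The hard step is the term $x_J\,y_ay^e$. Its $y$-part has exponent $2^k-2^{k-q}$ at $y_a$ instead of $2^k-1$. Moreover, if $a$ and $b$ are not consecutive in the ordering, the positions of the elements of $K$ lying between them shift, so the exponents do not literally match those of $z_J$. I would first treat the case in which no element of $K$ lies strictly between $a$ and $b$. There I would check directly that $x_Jy_ay^e\equiv z_J$ modulo $G_n$, using hit relations $\mathcal P^{2^s}$ on suitable $y$-monomials (these lie in $G_n$ by \eqref{eq:Pplus-in-G}) to move bits between $y_a$ and $y_b$, together with the lexicographic condition to discard error terms. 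If a single monomial does not suffice in the general case, the fallback is to adjust the choice of $y^e$ so as to redistribute the bits according to the ordering of $I$ and $J$, and then verify the congruence in the same way. If even that fails, one can instead compose the consecutive-case realizations along a path in $J(n,k)$; but since the lemma asks for a single monomial, the first attempt should be an adjusted choice of $y^e$.
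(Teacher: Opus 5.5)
There is a genuine gap, and it lies in the choice of $w$ itself, not only in the congruences you leave unchecked. Write $h_i=\varepsilon_i+2e_i$ for the combined exponents. For your $w=x_bz_I/y_b$, every index has exactly one zero digit in positions $0,\dots,k$:
\begin{itemize}
\item indices outside $I\cup J$ have it at position $0$;
\item the $p$-th element of $I$ has it at position $k-p+1$;
\item $b$ has it at position $1$.
\end{itemize}
So the two position-$1$ zeros sit at $b$ and at $c_0=\max I$, not at $a$ and $b$.

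Applying $Q_0$ to $x_t$ replaces $h_t$ by $h_t+1$, which clears every digit below the zero of $h_t$. If that zero is at a position $\ge 2$, the resulting term has $\omega_1=n-3$, and its polynomial factor lies in $G_n$. Consequently, when $a\neq\max I$:
\begin{itemize}
\item the term $x_Jy_ay^e$ that you single out as the main obstacle is killed by $\pi$, since its $y_a$-exponent $2^k-2^{k-q}$ is even. The congruence $x_Jy_ay^e\equiv z_J$ modulo $G_n$ that you intend to prove is therefore false.
\item the cross term for $c_0\in K$ survives, with exterior support $(I\setminus\{c_0\})\cup\{b\}\neq J$.
\end{itemize}
Concretely, take $n=3$, $k=2$, $I=\{1,3\}$, $J=\{2,3\}$. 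No element of $K$ lies between $a=1$ and $b=2$, so this is your ``first case''. Then $w=x_1x_2x_3y_1y_2^2y_3^2$ and
\[
 Q_0w=x_2x_3y_1^2y_2^2y_3^2+z_{\{1,3\}}+z_{\{1,2\}}.
\]
Hence $\vartheta(Q_0w)=\pi(z_{\{1,3\}})+\pi(z_{\{1,2\}})$, which is the vector of a different edge. Your $w$ works only when $a=\max I$ (or, using $x_az_J/y_a$, when $b=\max J$). This leaves out every pair with $\max(I\cup J)\in I\cap J$.

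The missing idea is to place the two position-$1$ zeros exactly at $a$ and $b$. Take $\alpha_{i0}=1$ iff $i\in I\cup J$, and $\alpha_{a1}=\alpha_{b1}=0$. Give the elements of $I\cap J$ their single zero at distinct positions $2,\dots,k$, set all remaining digits in positions $1,\dots,k$ equal to $1$, and set digits above $k$ equal to $0$. Then every term in which $Q_0$ hits $x_c$ with $c\in I\cap J$ carries into position $1$ and dies in $Y_n/G_n$. Only the two terms hitting $x_a$ and $x_b$ survive, with exterior supports $J$ and $I$.

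What remains is to identify these two monomials, whose zeros may sit at permuted positions, with $\pi(z_J)$ and $\pi(z_I)$. The paper takes this from Kameko's Proposition~5.3. Your reduced-power swap relations modulo lower weight are the right tool for it, but they must be applied to these terms rather than to $x_Jy_ay^e$.

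Modulo that same identification, your $w$ does realize the edge between $I$ and $(I\setminus\{\max I\})\cup\{b\}$, and such edges already connect $J(n,k)$. This would suffice for the reverse inclusion in Proposition~\ref{prop:Q0-parity}, but it does not prove the lemma as stated.
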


\begin{proof}
Put $C=I\cap J$, and write
\[
 I=C\cup\{a\},\qquad J=C\cup\{b\},\qquad
 L=I\cup J=C\cup\{a,b\}.
\]
Choose a bijection
\[
 \phi:C\longrightarrow\{2,\dots,k\};
\]
when $k=1$, both sets are empty. Define binary digits
$\alpha_{ij}=\alpha_{ij}(w_{I,J})$ by
\[
 \alpha_{i0}=
 \begin{cases}
  1,&i\in L,\\
  0,&i\notin L,
 \end{cases}
 \qquad
 \alpha_{i1}=
 \begin{cases}
  0,&i\in\{a,b\},\\
  1,&i\notin\{a,b\},
 \end{cases}
\]
and, for $2\le j\le k$, by
\[
 \alpha_{ij}=
 \begin{cases}
  0,&i=\phi^{-1}(j),\\
  1,&i\ne\phi^{-1}(j).
 \end{cases}
\]
Set $\alpha_{ij}=0$ for $j>k$. These digits determine a unique monomial
$w_{I,J}$. Indeed, if
\[
 h_i=\sum_{j=0}^{k}\alpha_{ij}2^j,
\]
then the exponent of $x_i$ is $\alpha_{i0}$ and the exponent of $y_i$ is
$(h_i-\alpha_{i0})/2$, which is a non-negative integer.

Every index $i$ has exactly one zero among
$\alpha_{i0},\dots,\alpha_{ik}$. For $i\notin L$ the zero occurs in
position $0$; for $i\in\{a,b\}$ it occurs in position $1$; and for
$i\in C$ it occurs in position $\phi(i)$. Hence
\[
 \alpha_i(w_{I,J})=k\qquad(1\le i\le n).
\]
Moreover,
\[
 \omega_0(w_{I,J})=k+1,\qquad
 \omega_1(w_{I,J})=n-2,\qquad
 \omega_j(w_{I,J})=n-1\quad(2\le j\le k).
\]
It follows that
\begin{align*}
 |w_{I,J}|
 &=(k+1)+2(n-2)+(n-1)\sum_{j=2}^{k}2^j\\
 &=(k+1)+2(n-2)+(n-1)(2^{k+1}-4)\\
 &=(n-1)(2^{k+1}-2)+k-1=d-1.
\end{align*}
The sum is empty when $k=1$, in which case the same identity holds.

Write
\[
 Q_0w_{I,J}=\sum_{t\in L}q_t,
\]
where $q_t$ is the summand obtained by applying $Q_0$ to the exterior
factor $x_t$. In terms of the combined exponent $h_t$, this operation
replaces $h_t$ by $h_t+1$ and leaves every $h_i$ with $i\ne t$
unchanged. If $t\in C$, then the binary digits of $h_t$ in positions
$0$ and $1$ are both equal to $1$. Adding $1$ therefore changes the
position--$1$ digit from $1$ to $0$. Consequently,
\[
 \omega_1(q_t)=n-3<n-1.
\]
The exterior factor contributes only to the position--$0$ digits, so this
is also the first weight coordinate of the polynomial factor of $q_t$.
That factor therefore belongs to $G_n$, and $\pi(q_t)=0$.
Thus, after applying $\pi$, only $q_a$ and $q_b$ can remain; their
exterior supports are $J=L\setminus\{a\}$ and
$I=L\setminus\{b\}$, respectively.

Kameko's Proposition~5.3 \cite{KamekoMotivic} applies because
$(\omega_0(w_{I,J}),\omega_1(w_{I,J}))=(k+1,n-2)$ and
$\alpha_i(w_{I,J})=k$ for every $i$. In the proof of that proposition,
the two surviving $M_1$--terms are identified by deleting, in turn, the
two indices at which the position--$1$ digit vanishes. Their exterior
supports are therefore $J=L\setminus\{a\}$ and
$I=L\setminus\{b\}$. Since $V$ contains exactly one $M_1$--basis
vector with each fixed exterior support, it follows that
\[
 \pi(Q_0w_{I,J})=\pi(z_I)+\pi(z_J).
\]
Applying $p_{M_1}$ gives the asserted identity.
\end{proof}

\begin{proposition}\label{prop:Q0-parity}
With notation as above,
\[
 \vartheta(Q_0(N_n^{d-1,*}))=\ker(\varepsilon)\subset V.
\]
\end{proposition}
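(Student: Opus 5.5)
The plan is to prove the two inclusions separately, using Proposition~\ref{prop:Kameko-Q0} for ``$\subseteq$'' and Lemma~\ref{lem:adjacent-realization} together with Lemma~\ref{lem:even-span} for ``$\supseteq$''.

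\textbf{The inclusion $\subseteq$.} Since $\vartheta$ and $Q_0$ are linear, it suffices to treat a monomial $z\in N_n^{d-1,*}$. The operation $Q_0$ lowers exterior degree by exactly one, so
\[
 \pr_k(Q_0z)=Q_0(\pr_{k+1}z).
\]
Hence if the exterior degree $\omega_0(z)$ differs from $k+1$, then $\pr_k(Q_0z)=0$ and $\vartheta(Q_0z)=0\in\ker(\varepsilon)$. If $\omega_0(z)=k+1$, then $Q_0z$ lies entirely in $\Lambda_n^k\otimes Y_n^{2d_1}$. By Proposition~\ref{prop:Kameko-Q0}, $\pi(Q_0z)$ is a combination of terms $\pi(m)$ with $m\in M_0$ and sums $\pi(\sigma_1(z_k))+\pi(\sigma_2(z_k))$. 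Applying $p_{M_1}$ kills the $U_0$ terms and leaves a sum of pairwise sums of basis vectors, each with parity $0$. (A degenerate pair $\sigma_1=\sigma_2$ contributes $0$.) So $\vartheta(Q_0z)\in\ker(\varepsilon)$.

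\textbf{The inclusion $\supseteq$.} By Lemma~\ref{lem:even-span}, applied to $W=V$ with basis $\pi(z_I)$ (note $N=\binom nk\ge 2$ since $1\le k<n$), $\ker(\varepsilon)$ is spanned by the vectors $\pi(z_I)+\pi(z_J)$ with $I\ne J$. It therefore suffices to show that each such vector lies in $\vartheta(Q_0(N_n^{d-1,*}))$, which is a subspace because $\vartheta\circ Q_0$ is linear. Lemma~\ref{lem:adjacent-realization} gives this directly when $I$ and $J$ are adjacent in the Johnson graph $J(n,k)$. For general $I\ne J$, choose a path $I=I_0,I_1,\dots,I_s=J$ in $J(n,k)$, exchanging one element at a time; this is possible because $J(n,k)$ is connected. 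Then
\[
 \pi(z_I)+\pi(z_J)=\sum_{t=1}^{s}\bigl(\pi(z_{I_{t-1}})+\pi(z_{I_t})\bigr)
 =\vartheta\Bigl(Q_0\sum_t w_{I_{t-1},I_t}\Bigr),
\]
since the intermediate terms cancel in pairs over $\Ftwo$.

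The main obstacle is the first inclusion: one must be sure that Proposition~\ref{prop:Kameko-Q0} really covers every monomial that can contribute to the $\Lambda_n^k$ component, and that the degree and exterior-degree bookkeeping leaves no other source. Both points follow from the fact that $Q_0$ is a derivation acting only on the exterior factors. The second inclusion is formal once Lemma~\ref{lem:adjacent-realization} is available.
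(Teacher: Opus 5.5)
Your proposal is correct and follows the paper's proof essentially step for step. The argument is the same: the exterior-degree reduction plus Proposition~\ref{prop:Kameko-Q0} gives $\subseteq$, and Lemma~\ref{lem:adjacent-realization}, connectivity of $J(n,k)$ and Lemma~\ref{lem:even-span} give $\supseteq$. Your explicit checks that $N=\binom nk\ge 2$ and that degenerate pairs $\sigma_1=\sigma_2$ contribute zero are small details the paper leaves implicit.
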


\begin{proof}
Let $z\in N_n^{d-1,*}$ be a monomial. If $z$ has $a$ exterior variables, then
$Q_0(z)$ is a sum of monomials with $a-1$ exterior variables. Thus
$\pr_k(Q_0z)=0$ unless $a=k+1$, equivalently unless $\omega_0(z)=k+1$.

Assume therefore that $\omega_0(z)=k+1$. By Proposition~\ref{prop:Kameko-Q0},
$\pi(Q_0z)$ is a linear combination of elements from $\pi(M_0)$ and pairwise
sums $\pi(z_I+z_J)$. After applying $p_{M_1}$, all $M_0$--terms vanish and the
remaining terms lie in the span of the vectors $e_I+e_J$. Hence
\[
 \vartheta(Q_0(N_n^{d-1,*}))\subseteq\ker(\varepsilon)
\]
by Lemma~\ref{lem:even-span}.

Conversely, Lemma~\ref{lem:adjacent-realization} shows that every edge vector of
the Johnson graph $J(n,k)$ lies in $\vartheta(Q_0(N_n^{d-1,*}))$. The graph
$J(n,k)$ is connected for $1\le k<n$: given two $k$--subsets, one replaces the
elements of the first which are not in the second, one at a time, by elements of
the second which are not in the first. Therefore, if $I$ and $J$ are arbitrary
$k$--subsets and
\[
 I=I_0,I_1,\dots,I_s=J
\]
is a path in $J(n,k)$, then over $\Ftwo$ one has
\[
 e_I+e_J=\sum_{t=0}^{s-1}(e_{I_t}+e_{I_{t+1}}).
\]
Thus every pairwise sum belongs to $\vartheta(Q_0(N_n^{d-1,*}))$. By
Lemma~\ref{lem:even-span}, the span of all pairwise sums is precisely
$\ker(\varepsilon)$, proving the reverse inclusion.
\end{proof}

\subsection{Reduction from \texorpdfstring{$A_+^\sharp$}{A-sharp+} to the Bockstein}

The following lemma is the point at which one must distinguish carefully between
arbitrary positive Steenrod operations and the Bockstein contribution. The proof
uses Kameko's observation \cite{KamekoMotivic} that reduced powers vanish after projection to
$Y_n/G_n$, but it applies this observation only to the leftmost reduced power in
an operation, which is the form needed for the full hit image.

\begin{lemma}\label{lem:full-hit-to-Q0}
In degree $d=k+2d_1$ one has
\[
 \vartheta\bigl(A^\sharp_+(N_n)\cap N_n^{d,*}\bigr)
 =\vartheta(Q_0(N_n^{d-1,*})).
\]
\end{lemma}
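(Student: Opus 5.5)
The inclusion $\supseteq$ is immediate. $Q_0$ has positive bidegree, so $Q_0(N_n^{d-1,*})\subset A^\sharp_+(N_n)\cap N_n^{d,*}$, and we can apply $\vartheta$ to both sides. For $\subseteq$ I will use the structure of $A^\sharp$ as an algebra generated by $Q_0$ and the $\mathcal P^a$. Every element of $A^\sharp_+(N_n)\cap N_n^{d,*}$ is a finite sum of terms $\theta(u)$. Here $\theta$ is a monomial $\theta_1\theta_2\cdots\theta_s$ with $s\ge1$ in the generators $Q_0,\mathcal P^a$ ($a\ge1$), and $u$ is a homogeneous monomial of $N_n$. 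I will sort these terms according to the leftmost factor $\theta_1$.

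First case: $\theta_1=Q_0$. Then $\theta(u)=Q_0(\theta'(u))$ with $\theta'(u)\in N_n^{d-1,*}$. Its image under $\vartheta$ lies in $\vartheta(Q_0(N_n^{d-1,*}))$ by linearity.

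Second case: $\theta_1=\mathcal P^a$ with $a\ge1$. Then $\theta(u)=\mathcal P^a(v)$ for some $v\in N_n^{d-a',*}$. Decompose $v=\sum_\lambda \lambda\, y_\lambda$ over exterior monomials $\lambda$ and polynomial parts $y_\lambda\in Y_n$. By \eqref{eq:P-on-tensor}, $\mathcal P^a(v)=\sum_\lambda\lambda\,\mathcal P^a(y_\lambda)$. The operation $\mathcal P^a$ preserves the exterior degree. Hence $\pr_k(\mathcal P^a v)=\sum_{|\lambda|=k}\lambda\,\mathcal P^a(y_\lambda)$, and each $\mathcal P^a(y_\lambda)$ is a sum of homogeneous components of $P^+(Y_n)$ of the appropriate degree. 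The component landing in $Y_n^{2d_1}$ lies in $P^+(Y_n)\cap Y_n^{2d_1}\subset G_n$ by \eqref{eq:Pplus-in-G}. So $\pi\circ\pr_k$ kills it, and $\vartheta(\mathcal P^a v)=0$.

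Since the two cases exhaust all terms, the sum of their images lies in $\vartheta(Q_0(N_n^{d-1,*}))$. Together with $\supseteq$ this gives equality.

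The main point of care is bookkeeping rather than a hard estimate. First, $A^\sharp_+$ must be spanned by words with a leftmost generator: any positive-degree element is a sum of nonempty words in $Q_0$ and the $\mathcal P^a$. Applied to a general element of $N_n$ rather than a monomial, this still works by linearity. Second, $P^+(Y_n)$ has to be understood degreewise, so that $\mathcal P^a(y)$ for $y$ of degree $2d_1-2a(\cdot)$ really lands in the degree-$2d_1$ part of $G_n$. Third, bidegree homogeneity must be tracked so that $\pr_k$ commutes with $\mathcal P^a$. This is where the formula \eqref{eq:P-on-tensor} modulo $\tau$ is essential, because without reducing modulo $\tau$ the Cartan formula would mix exterior degrees. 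I do not need any information about inner factors $\theta_2,\dots,\theta_s$. This is exactly why only the leftmost operation is examined.
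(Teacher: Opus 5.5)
Your proposal is correct and is essentially the paper's own proof. Both arguments write a hit element as a sum of words in $Q_0$ and the $\mathcal P^a$ and sort the words by their leftmost letter. Words beginning with $Q_0$ land in $Q_0(N_n^{d-1,*})$, and words beginning with $\mathcal P^a$ are killed by $\vartheta$ via $\mathcal P^a(\lambda y)=\lambda\,\mathcal P^a(y)$ and $P^+(Y_n)\subset G_n$. The only blemish is the undefined shift $a'$, which should be $2a$, the topological degree of $\mathcal P^a$.
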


\begin{proof}
First let $a>0$ and let $u\in N_n$ be such that $\mathcal P^a u\in N_n^{d,*}$.
It suffices by linearity to consider a monomial $u=\lambda y$ with
$\lambda\in\Lambda_n$ and $y\in Y_n$. By \eqref{eq:P-on-tensor},
\[
 \mathcal P^a(\lambda y)=\lambda\mathcal P^a(y).
\]
Since $a>0$, the element $\mathcal P^a(y)$ belongs to $P^+(Y_n)$, hence to
$G_n$ by \eqref{eq:Pplus-in-G}. Consequently
\[
 \pr_k(\mathcal P^a u)\in \Lambda_n^k\otimes G_n^{2d_1},
\]
and this subspace is killed by $\pi$. Therefore
\begin{equation}\label{eq:Pvanish-theta}
 \vartheta(\mathcal P^a u)=0\qquad(a>0).
\end{equation}

The algebra $A^\sharp$ is generated by $Q_0$ and the reduced powers
$\mathcal P^a$. Hence every element of $A^\sharp_+(N_n)$ is a sum of terms of
the form
\[
 g_1(g_2\cdots g_s u),
\]
where $g_i$ is one of the generators $Q_0,\mathcal P^1,\mathcal P^2,\dots$ and
$s\ge 1$. If $g_1=Q_0$, this term lies in $Q_0(N_n)$. If
$g_1=\mathcal P^a$ with $a>0$, then its image under $\vartheta$ is zero by
\eqref{eq:Pvanish-theta}. Therefore
\[
 \vartheta(A^\sharp_+(N_n)\cap N_n^{d,*})
 \subseteq \vartheta(Q_0(N_n)\cap N_n^{d,*}).
\]
Since $Q_0$ raises the topological degree by $1$, the right-hand side is
$\vartheta(Q_0(N_n^{d-1,*}))$. The opposite inclusion is immediate from
$Q_0\in A^\sharp_+$.
\end{proof}

\begin{theorem}\label{thm:local-parity}
Let $n\ge 2$, let $1\le k<n$, and let $d=k+2d_1$ with
$d_1=(n-1)(2^k-1)$. With $V$, $\vartheta$, and $\varepsilon$ as in
Definitions~\ref{def:vartheta} and~\ref{def:epsilon}, one has
\begin{equation}\label{eq:local-image-kernel}
 \vartheta\bigl(A^\sharp_+(N_n)\cap N_n^{d,*}\bigr)=\ker(\varepsilon).
\end{equation}
Equivalently, the sequence
\begin{equation}\label{eq:exact-local-quotient}
0\longrightarrow
\vartheta\bigl(A^\sharp_+(N_n)\cap N_n^{d,*}\bigr)
\longrightarrow V\xrightarrow{\,\varepsilon\,}\Ftwo\longrightarrow 0
\end{equation}
is exact, and hence
\[
 V/\vartheta\bigl(A^\sharp_+(N_n)\cap N_n^{d,*}\bigr)\cong\Ftwo.
\]
If $u\in N_n^{d,*}$ satisfies $\varepsilon(\vartheta(u))=1$, then
$u\notin A^\sharp_+(N_n)$. In particular, every odd-parity linear combination
of the monomials $\sigma(z_k)$ is non-hit in $N_n$.
\end{theorem}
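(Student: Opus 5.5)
The plan is to assemble the theorem from the two results that precede it. Lemma~\ref{lem:full-hit-to-Q0} identifies the local image of the full hit subspace in degree $d$ with the local image of the Bockstein:
\[
 \vartheta\bigl(A^\sharp_+(N_n)\cap N_n^{d,*}\bigr)=\vartheta(Q_0(N_n^{d-1,*})).
\]
Proposition~\ref{prop:Q0-parity} then identifies the right-hand side with $\ker(\varepsilon)$. Chaining these two equalities gives \eqref{eq:local-image-kernel} directly. The hypotheses $n\ge 2$ and $1\le k<n$ are exactly those under which both results were proved. They also guarantee $N=\binom nk\ge 2$, which Lemma~\ref{lem:even-span} needs, and they make the Johnson graph $J(n,k)$ connected.

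Next I would deduce the exact sequence \eqref{eq:exact-local-quotient}. Exactness at the left and middle terms is just the statement that the image equals $\ker(\varepsilon)$, with the left map being the inclusion. Exactness at $\Ftwo$ is surjectivity of $\varepsilon$, which holds because $\varepsilon(\pi(m_1))=1$. Since $\varepsilon$ is surjective with kernel equal to the image, the first isomorphism theorem gives $V/\vartheta(\cdots)\cong\Ftwo$.

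For the non-hit consequence I would argue by contraposition. If $u\in A^\sharp_+(N_n)$ has degree $d$, then $\vartheta(u)$ lies in the image computed above, so $\varepsilon(\vartheta(u))=0$. Hence $\varepsilon(\vartheta(u))=1$ forces $u\notin A^\sharp_+(N_n)$.

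For the final sentence, take $u=\sum_{i\in S}m_i$ with $|S|$ odd. Each $m_i$ lies in $\Lambda_n^k\otimes Y_n^{2d_1}$, so $\pr_k u=u$ and $\pi(u)=\sum_{i\in S}\pi(m_i)\in V$. The projection $p_{M_1}$ fixes $V$, so $\vartheta(u)=\sum_{i\in S}\pi(m_i)$. Its parity is $|S|\bmod 2=1$, so $u$ is non-hit by the previous step.

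There is essentially no obstacle here, since the substantive content sits in Lemma~\ref{lem:adjacent-realization} and Lemma~\ref{lem:full-hit-to-Q0}. The only point needing care is checking that the $m_i$ genuinely lie in the $(k,2d_1)$ summand, so that $\vartheta$ acts on them as the identity onto the basis of $V$. This is immediate from \eqref{eq:sigma-zk} and the degree computation \eqref{eq:intro-degree}.
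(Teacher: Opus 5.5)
Your proposal is correct and is essentially the paper's own proof: you chain Lemma~\ref{lem:full-hit-to-Q0} with Proposition~\ref{prop:Q0-parity} to get the image equality, use surjectivity of $\varepsilon$ for exactness, get the non-hit claim by contraposition, and compute that $\vartheta(u_S)=\sum_{\sigma\in S}\pi(\sigma(z_k))$ has parity $|S| \bmod 2$. Your check that each $\sigma(z_k)$ lies in $\Lambda_n^k\otimes Y_n^{2d_1}$, so that $\pr_k$ and $p_{M_1}$ act trivially on it, makes explicit a step the paper leaves implicit.
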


\begin{proof}
The equality \eqref{eq:local-image-kernel} follows immediately from
Lemma~\ref{lem:full-hit-to-Q0} and Proposition~\ref{prop:Q0-parity}. Since
$\varepsilon$ is visibly nonzero, \eqref{eq:exact-local-quotient} is exact. If
$u$ were hit and $\varepsilon(\vartheta(u))=1$, then \eqref{eq:local-image-kernel}
would force $\vartheta(u)\in\ker(\varepsilon)$, a contradiction.

Finally, let
\[
 u_S=\sum_{\sigma\in S}\sigma(z_k)
\]
for a non-empty subset $S\subset\mono(k)$. Since the elements
$\pi(\sigma(z_k))$ form the chosen basis of $V$, one has
\[
 \vartheta(u_S)=\sum_{\sigma\in S}\pi(\sigma(z_k)),
\]
and hence $\varepsilon(\vartheta(u_S))=|S|\pmod 2$. If $|S|$ is odd, the
preceding paragraph shows that $u_S$ is non-hit.
\end{proof}

\begin{remark}\label{rem:not-global-iff}
Theorem~\ref{thm:local-parity} is an exact classification of the local
$M_1$--components of hit elements. It should not be read as the assertion that
an even-parity vector of $V$ is itself hit as an element of $N_n^{d,*}$. The
precise assertion is that even parity is equivalent to being the image under
$\vartheta$ of some hit element, whereas odd parity is an obstruction to being
hit.
\end{remark}

\begin{corollary}\label{cor:nonzero-N}
Let $S\subset\mono(k)$ have odd cardinality, and set
\[
 u_S=\sum_{\sigma\in S}\sigma(z_k)\in N_n^{d,*}.
\]
Then the class of $u_S$ is nonzero in
\[
 QN_n^{d,*}=N_n^{d,*}/(A^\sharp_+(N_n)\cap N_n^{d,*}).
\]
\end{corollary}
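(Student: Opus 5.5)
The plan is to deduce the corollary directly from Theorem~\ref{thm:local-parity}, which already contains the non-hit statement; the corollary only rephrases it as a statement about the hit quotient. The argument rests on one observation: the class of $u_S$ in $QN_n^{d,*}$ is zero exactly when $u_S\in A^\sharp_+(N_n)\cap N_n^{d,*}$. So it suffices to show that $u_S$ lies in $N_n^{d,*}$ and is not hit.

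\textbf{Step 1: degree.} Each monomial $\sigma(z_k)$ has $k$ exterior factors and polynomial part of degree $2d_1$, so it lies in $\Lambda_n^k\otimes Y_n^{2d_1}\subset N_n^{d,*}$. By the degree formula \eqref{eq:intro-degree}, its topological degree is $d$. Hence $u_S\in N_n^{d,*}$.

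\textbf{Step 2: compute $\vartheta(u_S)$.} Since every summand of $u_S$ already lies in the $\Lambda_n^k\otimes Y_n^{2d_1}$ component, $\pr_k(u_S)=u_S$. Applying $\pi$ gives $\sum_{\sigma\in S}\pi(\sigma(z_k))$, which lies in $V$. Therefore $p_{M_1}$ acts as the identity on it, and
\[
\vartheta(u_S)=\sum_{\sigma\in S}\pi(\sigma(z_k)).
\]
The vectors $\pi(\sigma(z_k))$ are distinct basis vectors of $V$, by \eqref{eq:Kameko-basis} and \eqref{eq:M1}. So $\varepsilon(\vartheta(u_S))=|S|\bmod 2=1$.

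\textbf{Step 3: conclude.} By Theorem~\ref{thm:local-parity}, any $u\in N_n^{d,*}$ with $\varepsilon(\vartheta(u))=1$ is not in $A^\sharp_+(N_n)$. Hence the class of $u_S$ in $QN_n^{d,*}$ is nonzero.

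The only point needing care is Step 2: we must check that the basis vectors $\pi(\sigma(z_k))$ are pairwise distinct and linearly independent. This holds because $\pi(M_0\cup M_1)$ is a basis, so distinct $\sigma$ give distinct basis elements and no cancellation occurs. Everything else is bookkeeping.
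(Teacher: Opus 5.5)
Your proof is correct and follows essentially the same route as the paper, which deduces the corollary from the final assertion of Theorem~\ref{thm:local-parity}; that theorem's proof contains your Step~2 computation $\varepsilon(\vartheta(u_S))=|S| \bmod 2$ and the resulting non-hit conclusion. Your Step~1 degree check and your remark on linear independence of the $\pi(\sigma(z_k))$ simply spell out details the paper leaves implicit.
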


\begin{proof}
This is exactly the final assertion of Theorem~\ref{thm:local-parity}, restated
in the quotient.
\end{proof}

\subsection{Comparison with the classical Peterson hit problem}

The parity functional arises from a structure which is present after
reduction modulo $\tau$ but is destroyed by classical realization. The
following remark records the precise distinction.

\begin{remark}\label{rem:classical-comparison}
In the quotient $N_n=M_n/(\tau)$ one has $x_i^2=0$, so that
\[
 N_n=\Lambda(x_1,\dots,x_n)\otimes\Ftwo[y_1,\dots,y_n].
\]
The exterior degree is therefore well defined. The operation $Q_0$ sends
$x_i$ to $y_i$ and lowers exterior degree by one, while every positive
reduced power has zero image under $\vartheta$ because its polynomial
factor lies in $P^+(Y_n)\subset G_n$. Kameko's calculation then shows that
the surviving $Q_0$--terms in the $M_1$--summand are pairwise sums. The
coefficient-sum functional $\varepsilon$ vanishes on these sums, and the
connectivity of the Johnson graph shows that this is the only linear
obstruction.

Classical realization is the specialization
\[
 A^{*,*}/(\tau+1)\cong A,
 \qquad
 M_n/(\tau+1)\cong P_n,
 \qquad
 y_i\longmapsto x_i^2.
\]
After this specialization the exterior generator $x_i$ and the polynomial
generator $y_i$ are coupled by $y_i=x_i^2$. Hence the exterior-degree
projection $\pr_k$, and therefore $\vartheta$ and $\varepsilon$, do not
factor through $P_n$. In particular, an odd-parity motivic non-hit class
may realize to a classical hit element. When $\beta(d)>n$, Wood's theorem
indeed forces every element of $P_n^d$ to be hit. Thus the local parity
theorem is compatible with, rather than contrary to, the classical
Peterson theorem.
\end{remark}

\subsection{A transfer question}

Singer's rank--$n$ algebraic transfer has source the
$GL_n(\Ftwo)$--coinvariants of the $A$--annihilated part of
$H_*(BV_n;\Ftwo)$; the graded dual of this source is the invariant
subspace $(QP_n)^{GL_n(\Ftwo)}$ of the classical hit quotient
\cite{Singer}. In the motivic setting, Gregersen and Rognes construct
small and large motivic Singer constructions. They prove that the
associated evaluation homomorphisms are $\operatorname{Ext}$--equivalences
\cite{GregersenRognes}. This provides relevant Adams-theoretic structure,
but it does not identify a transfer whose source is $QM_n$ or $QN_n$.
There is also an invariance issue. The splitting $U_0\oplus V$ is defined
using Kameko's order-sensitive monomial basis, and the projection
$\vartheta$ is not asserted to be $GL_n(\Ftwo)$--equivariant. In
particular, Theorem~\ref{thm:local-parity} does not assert that an
individual odd-parity class is fixed by $GL_n(\Ftwo)$.

For this question, take the base field to be $\mathbb C$. Write
$H^{*,*}=H^{*,*}(\Spec\mathbb C;\Ftwo)$, and let
$\mathcal A_{\mathrm{mot}}$ denote the mod~$2$ motivic Steenrod algebra over
$\mathbb C$.

\begin{question}\label{question:motivic-transfer}
Can one construct a natural motivic algebraic transfer, defined on a
suitable invariant subquotient of the motivic hit quotient, with target
\[
 \operatorname{Ext}^{n,*,*}_{\mathcal A_{\mathrm{mot}}}
 \bigl(H^{*,*},H^{*,*}\bigr)?
\]
Can such a transfer be made compatible with classical realization and
with the motivic Singer constructions of Gregersen and Rognes? If so,
does the $GL_n(\Ftwo)$--submodule generated by the odd-parity classes of
Theorem~\ref{thm:local-parity} contain an invariant class with nonzero
transfer image?
\end{question}

The formulation deliberately concerns the invariant submodule generated
by the local classes, rather than assigning a transfer value to every
odd-parity sum. Establishing invariance under $GL_n(\Ftwo)$ is logically
prior to evaluating any algebraic transfer.

\section{Base fields and systematic motivic Peterson counterexamples}
\label{sec:base-change}

This section first identifies base-field hypotheses under which the local
parity argument applies without alteration. It then uses naturality under field extension
to descend the resulting non-hit classes to arbitrary fields of
characteristic different from $2$. The final part proves the binary
criterion for the general family $k=n-m$.

For every field $K$ with $\operatorname{char}(K)\ne 2$, write
\[
 M_n(K)=H^{*,*}(BV_n/K;\Ftwo),
\]
let $A_K^{*,*}$ be the mod~$2$ motivic Steenrod algebra over $K$, and let
$A_{K,+}^{*,*}$ be its homogeneous positive-degree part. We use the
notation
\[
 QM_n^{d,*}(K)=M_n(K)_{d,*}/
 \bigl(A_{K,+}^{*,*}(M_n(K))\cap M_n(K)_{d,*}\bigr).
\]

\subsection{Algebraically closed fields of characteristic different from
\texorpdfstring{$2$}{2}}

Assume now that $K$ is algebraically closed and
$\operatorname{char}(K)\ne 2$. The norm-residue theorem
\cite{VoevodskyNormResidue} implies
\[
 H^{*,*}(\Spec K;\Ftwo)\cong\Ftwo[\tau],
\]
and the class $\rho=[-1]\in H^{1,1}(\Spec K;\Ftwo)$ vanishes. Voevodsky's
construction in characteristic zero and its positive-characteristic
extension by Hoyois--Kelly--\O stv\ae r supply the mod~$2$ motivic
Steenrod operations whenever the characteristic is different from $2$
\cite{Voevodsky,HoyoisKellyOstvaer}. In the present algebraically closed
case, the standard presentation of the motivic cohomology of $BV_n$ becomes
\[
 M_n(K)
 \cong
 \Ftwo[\tau,x_1,\dots,x_n,y_1,\dots,y_n]/
 (x_1^2+\tau y_1,\dots,x_n^2+\tau y_n),
\]
with the same bidegrees and the same formulas for $Q_0$ and the reduced
powers as over $\mathbb C$; compare the general $B\mu_2$ presentation in
\cite{GregersenRognes}. This includes the characteristic-zero description
used by Kameko and Yagita \cite{KamekoMotivic,Yagita}.

Let
\[
 q_\tau:M_n(K)\longrightarrow N_n(K)=M_n(K)/(\tau)
\]
be the quotient map, and put
\[
 A_K^\sharp=A_K^{*,*}/(\tau).
\]
We denote the homogeneous positive-degree part of $A_K^\sharp$ by
$A_{K,+}^\sharp$, and write
\[
 QN_n^{d,*}(K)=N_n(K)^{d,*}/
 \bigl(A_{K,+}^\sharp(N_n(K))\cap N_n(K)^{d,*}\bigr).
\]

\begin{lemma}\label{lem:tau-contrapositive}
If $u\in M_n(K)_{d,*}$ is hit, then $q_\tau(u)$ is hit in $N_n(K)$, that is,
\[
 q_\tau(u)\in A_{K,+}^\sharp(N_n(K))\cap N_n(K)^{d,*}.
\]
Consequently, if $q_\tau(u)$ is nonzero in $QN_n^{d,*}(K)$, then $u$ is
nonzero in $QM_n^{d,*}(K)$.
\end{lemma}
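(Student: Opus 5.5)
The plan is to show that reduction modulo $\tau$ is compatible with the module structures: $q_\tau$ is a homomorphism of modules along the quotient map of algebras $A_K^{*,*}\to A_K^\sharp$, and the ideal $(\tau)\subset M_n(K)$ is stable under the motivic Steenrod operations. Once that compatibility is in hand, the first assertion is immediate, and the second is its contrapositive.

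\textbf{Step 1: $(\tau)$ is stable.} We need $\theta(\tau m)\in(\tau)$ for every $\theta\in A_K^{*,*}$ and $m\in M_n(K)$. Since $\rho=0$ and $H^{*,*}(\Spec K)=\Ftwo[\tau]$, the motivic Cartan formula for $Q_0$ and the reduced powers has the usual form, with the $\tau$-correction terms only. We also use $Q_0(\tau)=\rho=0$ and $\mathcal P^a(\tau)=0$ for $a\ge1$; the latter holds for degree reasons, because $\mathcal P^a$ has bidegree $(2a,a)$ and $H^{2a,1+a}(\Spec K)=0$ for $a\ge1$. Applying the Cartan formula to $\tau\cdot m$ then gives $\theta(\tau m)=\tau\,\theta'(m)$ for suitable operations $\theta'$. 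It suffices to check this on the generators $Q_0$ and $\mathcal P^a$ and then induct on word length.

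\textbf{Step 2: the action descends.} By Step 1, $N_n(K)=M_n(K)/(\tau)$ is a module over $A_K^{*,*}$. The two-sided ideal $\tau A_K^{*,*}$ acts on it through multiplication by $\tau$, which is zero there, so the action factors through $A_K^\sharp=A_K^{*,*}/(\tau)$. Consequently $q_\tau(\theta u)=\bar\theta\, q_\tau(u)$, where $\bar\theta$ is the image of $\theta$. The quotient map $A_K^{*,*}\to A_K^\sharp$ preserves bidegree, so it sends $A_{K,+}^{*,*}$ into $A_{K,+}^\sharp$.

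\textbf{Step 3: conclusion.} Suppose $u=\sum\theta_i(m_i)$ with $\theta_i\in A_{K,+}^{*,*}$, where we may take each summand homogeneous of topological degree $d$. Then
\[
q_\tau(u)=\sum\bar\theta_i\bigl(q_\tau(m_i)\bigr)\in A_{K,+}^\sharp(N_n(K))\cap N_n(K)^{d,*},
\]
since $q_\tau$ preserves bidegree. The contrapositive yields the statement about nonzero classes in $QN_n^{d,*}(K)$ and $QM_n^{d,*}(K)$.

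The main obstacle is Step 1, namely making sure the Cartan formula over $K$ introduces no terms that are not divisible by $\tau$. Over a general field such terms would involve $\rho$, but $\rho=0$ in the algebraically closed case. I would cite the Cartan formulas of Voevodsky and Hoyois--Kelly--\O stv\ae r directly for this step.
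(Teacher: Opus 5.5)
Your proposal is correct and follows the same route as the paper. You write $u=\sum_i\theta_i(m_i)$ with positive-degree operations, reduce modulo $\tau$ to get $q_\tau(u)=\sum_i\bar\theta_i(q_\tau(m_i))$, note that each $\bar\theta_i$ is either zero or of positive degree, and take the contrapositive. Your Steps~1--2 (stability of $(\tau)$ via the Cartan formula with $\rho=0$ and $\mathcal P^a(\tau)=0$) make explicit the compatibility of $q_\tau$ with $A_K^{*,*}\to A_K^\sharp$, which the paper uses without comment.
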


\begin{proof}
Write
\[
 u=\sum_i a_i(v_i)
\]
with $a_i\in A_{K,+}^{*,*}$ and $v_i\in M_n(K)$. Passing modulo $(\tau)$ gives
\[
 q_\tau(u)=\sum_i \overline a_i\bigl(q_\tau(v_i)\bigr),
\]
where $\overline a_i$ is the image of $a_i$ in $A_K^\sharp$. Every nonzero
$\overline a_i$ has positive degree, while the terms for which
$\overline a_i=0$ disappear. Hence $q_\tau(u)$ lies in
$A_{K,+}^\sharp(N_n(K))$. The second assertion is the contrapositive.
\end{proof}

\begin{proposition}\label{prop:base-change-parity}
Let $K$ be an algebraically closed field of characteristic different from
$2$. The local parity Theorem~\ref{thm:local-parity} and
Corollary~\ref{cor:nonzero-N} hold over $K$. In particular, if
$S\subset\mono(k)$ has odd cardinality, then
\[
 u_S=\sum_{\sigma\in S}\sigma(z_k)\in M_n(K)_{d,*}
\]
represents a nonzero class in $QM_n^{d,*}(K)$.
\end{proposition}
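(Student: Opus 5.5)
The plan is to observe that every ingredient in the proof of Theorem~\ref{thm:local-parity} is purely algebraic. Each one depends only on the presentation of $M_n(K)$, the formulas for $Q_0$ and $\mathcal P^a$ on the generators, and the Cartan formula. Over an algebraically closed $K$ with $\operatorname{char}(K)\ne 2$, the presentation of $M_n(K)$ recalled above is literally the same ring, with the same bidegrees and the same action. So the first step is to record that $N_n(K)=M_n(K)/(\tau)\cong\Lambda_n\otimes Y_n$. On this ring $Q_0(x_i)=y_i$, $Q_0(y_i)=0$, $\mathcal P^a(x_i)=0$, $\mathcal P^1(y_i)=y_i^2$ and $\mathcal P^a(y_i)=0$ for $a\ge 2$. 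Modulo $\tau$ the Cartan formula has no correction term, which gives \eqref{eq:P-on-tensor}. We also need that $A_K^\sharp$ is generated by $Q_0$ and the $\mathcal P^a$. This holds because $A_K^{*,*}$ is generated over $H^{*,*}(\Spec K)=\Ftwo[\tau]$ by these operations: since $\rho=0$, Voevodsky's and Hoyois--Kelly--{\O}stv{\ae}r's description of the motivic Steenrod algebra and its Adem relations agrees with the one over $\mathbb C$.

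The second step is to re-run the argument of Section~\ref{sec:parity} verbatim over $K$. The subspace $G_n$, the quotient map $\pi$, and Kameko's basis $\pi(M_0\cup M_1)$ are all defined combinatorially inside $\Lambda_n\otimes Y_n$, and so is the splitting \eqref{eq:U0V}. Hence $\vartheta$ and $\varepsilon$ make sense over $K$ unchanged. The cited results also transfer. Proposition~\ref{prop:Kameko-Q0} and Kameko's Proposition~5.3 are statements about $Q_0$ acting on monomials of $\Lambda_n\otimes Y_n$ and about the combinatorics of $G_n$, so they use nothing specific to $\mathbb C$. Lemma~\ref{lem:adjacent-realization} and Proposition~\ref{prop:Q0-parity} then hold over $K$. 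For Lemma~\ref{lem:full-hit-to-Q0}, the only input is that $A^\sharp_K$ is generated by $Q_0$ and positive reduced powers, together with $P^+(Y_n)\subset G_n$. This gives Theorem~\ref{thm:local-parity} and Corollary~\ref{cor:nonzero-N} over $K$.

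The final step is to lift back to $M_n(K)$. For odd $|S|$, the element $u_S\in M_n(K)_{d,*}$ (a sum of monomials without $\tau$) maps under $q_\tau$ to the corresponding $u_S\in N_n(K)^{d,*}$. That image is nonzero in $QN_n^{d,*}(K)$ by the corollary just established. Lemma~\ref{lem:tau-contrapositive} then shows that $u_S$ is nonzero in $QM_n^{d,*}(K)$.

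The main obstacle is the first step: justifying that the structure of $A_K^{*,*}$ acting on $M_n(K)$ is identical to the complex case. In particular, we need the generation statement for $A_K^\sharp$, and that no extra terms involving $\rho$ or other classes of $H^{*,*}(\Spec K)$ appear in the Cartan formula or the action. I would handle this by citing the vanishing $\rho=0$ and $H^{*,*}(\Spec K)=\Ftwo[\tau]$ from the norm-residue theorem, together with the general $B\mu_2$ presentation of Gregersen--Rognes specialized to $\rho=0$. Once that identification is in place, everything else transfers formally.
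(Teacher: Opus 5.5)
Your proposal is correct and follows the paper's route. Both note that the proof of Theorem~\ref{thm:local-parity} uses only the presentation $N_n(K)\cong\Lambda_n\otimes Y_n$, the formulas for $Q_0$ and the $\mathcal P^a$, the inclusion $P^+(Y_n)\subset G_n$, and Kameko's basis calculation, all unchanged over $K$, and then lift non-vanishing from $QN_n^{d,*}(K)$ to $QM_n^{d,*}(K)$ via Lemma~\ref{lem:tau-contrapositive}; your remark that $A_K^\sharp$ is generated by $Q_0$ and the $\mathcal P^a$ because $H^{*,*}(\Spec K)=\Ftwo[\tau]$ and $\rho=0$ spells out an input to Lemma~\ref{lem:full-hit-to-Q0} that the paper leaves implicit.
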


\begin{proof}
The proof of Theorem~\ref{thm:local-parity} uses only the presentation
\[
 N_n(K)=\Lambda(x_1,\dots,x_n)\otimes\Ftwo[y_1,\dots,y_n],
\]
the formulas for $Q_0$ and the reduced powers, the inclusion
$P^+(Y_n)\subset G_n$, and Kameko's monomial basis calculation. All these
data are identical over $K$ and over $\mathbb C$. Hence the same proof
gives
\[
 \vartheta_K\bigl(A_{K,+}^\sharp(N_n(K))\cap N_n(K)^{d,*}\bigr)
 =\ker(\varepsilon_K).
\]
Every odd-parity sum is therefore nonzero in $QN_n^{d,*}(K)$, and
Lemma~\ref{lem:tau-contrapositive} lifts this non-vanishing to
$QM_n^{d,*}(K)$.
\end{proof}

\subsection{Descent of non-hit classes along field extensions}

The local decomposition need not retain the same form over a
non-algebraically-closed field. The non-hit conclusion nevertheless
descends because motivic Steenrod operations are natural under extension
of the base field.

\begin{lemma}\label{lem:field-extension-hit}
Let $L/K$ be an extension of fields of characteristic different from $2$,
and let
\[
 b_{L/K}:M_n(K)\longrightarrow M_n(L)
\]
be the base-change homomorphism. If $u\in M_n(K)$ is hit over $K$, then
$b_{L/K}(u)$ is hit over $L$.
\end{lemma}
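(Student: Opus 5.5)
The argument is a naturality argument; no computation inside $M_n$ is needed. The base-change map $b_{L/K}$ is induced by the morphism of schemes $f\colon\Spec L\to\Spec K$, namely as pullback along $BV_n/L\to BV_n/K$. The motivic Steenrod operations are natural with respect to such pullbacks (Voevodsky in characteristic zero, Hoyois--Kelly--\O stv\ae r in positive characteristic different from $2$). Concretely, there is a ring homomorphism
\[
 f^*\colon A_K^{*,*}\longrightarrow A_L^{*,*}
\]
that preserves bidegree, and for all $a\in A_K^{*,*}$ and $v\in M_n(K)$ it satisfies
\[
 b_{L/K}(a\,v)=f^*(a)\,b_{L/K}(v).
\]
This compatibility is the one input I need.

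Key steps:
\begin{enumerate}
\item Record the compatibility above. The cleanest route is through generators. $A_K^{*,*}$ is generated as a left $H^{*,*}(\Spec K)$--module by admissible monomials in $Q_0$ and the $\mathcal P^a$, with $\mathcal P^a$ of bidegree $(2a,a)$ and $Q_0$ of bidegree $(1,0)$. Each of $Q_0$ and $\mathcal P^a$ commutes with pullback, because it is a natural transformation of cohomology theories on smooth schemes over the base and is stable under base change. Coefficients $c\in H^{*,*}(\Spec K)$ act by multiplication, and $b_{L/K}$ is a ring map that sends $c$ to its image in $H^{*,*}(\Spec L)$.
\item Write the hit element as $u=\sum_i a_i(v_i)$ with $a_i\in A_{K,+}^{*,*}$ homogeneous, i.e.\ of bidegree $(p_i,q_i)$ with $p_i+q_i>0$. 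Applying $b_{L/K}$ gives
\[
 b_{L/K}(u)=\sum_i f^*(a_i)\bigl(b_{L/K}(v_i)\bigr).
\]
\item Check that each $f^*(a_i)$ lies in $A_{L,+}^{*,*}$. Since $f^*$ preserves bidegree, $f^*(a_i)$ is either zero or homogeneous of bidegree $(p_i,q_i)$, which still has positive total degree. Terms with $f^*(a_i)=0$ drop out. Hence $b_{L/K}(u)$ is hit over $L$, and it stays in the same degree $d$ because $b_{L/K}$ preserves bidegree.
\end{enumerate}

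The main obstacle is step 1: justifying that the action of the whole algebra $A_K^{*,*}$, not just of its named generators, is compatible with $b_{L/K}$. Coefficients such as $\rho$ may map to zero over $L$, and the algebra has Adem relations involving $\rho$ and $\tau$. I would sidestep this by never forming the map $f^*$ abstractly. It suffices to write each $a_i$ as an $H^{*,*}(\Spec K)$--linear combination of composites of $Q_0$ and $\mathcal P^a$, then push through one operation or one coefficient at a time, using naturality of each operation and multiplicativity of $b_{L/K}$. Every factor then becomes an element of $A_L^{*,*}$ of the same bidegree, so the positivity of bidegree survives term by term. This is the same bookkeeping used in Lemma~\ref{lem:tau-contrapositive}.
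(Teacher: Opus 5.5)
Your proposal is correct and is the same argument as the paper's. Naturality of the motivic Steenrod operations under pullback along $\Spec L\to\Spec K$ turns $u=\sum_i a_i(v_i)$ into $b_{L/K}(u)=\sum_i (a_i)_L\bigl(b_{L/K}(v_i)\bigr)$, and because bidegrees are preserved each nonzero $(a_i)_L$ still has positive degree; your term-by-term bookkeeping through coefficients and the generators $Q_0,\mathcal P^a$ only spells out the compatibility that the paper invokes as a single fact.
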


\begin{proof}
The motivic Steenrod operations commute with pullback. Thus, from an
expression
\[
 u=\sum_i a_i(v_i),
 \qquad a_i\in A_{K,+}^{*,*},\quad v_i\in M_n(K),
\]
one obtains
\[
 b_{L/K}(u)=\sum_i (a_i)_L\bigl(b_{L/K}(v_i)\bigr),
\]
where $(a_i)_L$ is the operation induced over $L$. Each nonzero
$(a_i)_L$ has positive degree. Hence the base-changed element is hit.
\end{proof}

\begin{proposition}\label{prop:all-fields-nonhit}
Let $K$ be any field of characteristic different from $2$. If
$S\subset\mono(k)$ has odd cardinality, then
\[
 u_S=\sum_{\sigma\in S}\sigma(z_k)\in M_n(K)_{d,*}
\]
represents a nonzero class in $QM_n^{d,*}(K)$.
\end{proposition}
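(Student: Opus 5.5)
The plan is to reduce to the algebraically closed case through a base change to an algebraic closure. Let $\overline K$ be an algebraic closure of $K$; it is algebraically closed and $\operatorname{char}(\overline K)=\operatorname{char}(K)\ne 2$. Consider the base-change homomorphism
\[
 b_{\overline K/K}:M_n(K)\longrightarrow M_n(\overline K).
\]
I will argue by contradiction. Suppose $u_S$ were hit over $K$. Lemma~\ref{lem:field-extension-hit} would then make $b_{\overline K/K}(u_S)$ hit over $\overline K$. I will identify this image with the element $u_S\in M_n(\overline K)_{d,*}$ appearing in Proposition~\ref{prop:base-change-parity}. That proposition says this element represents a nonzero class in $QM_n^{d,*}(\overline K)$, which gives the contradiction.

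The key step is to make sense of $u_S\in M_n(K)_{d,*}$ when $K$ is arbitrary, and to check that it maps to the corresponding element over $\overline K$. Over a general field $K$, the ring $M_n(K)$ is a module over $H^{*,*}(\Spec K;\Ftwo)$, which is larger than $\Ftwo[\tau]$ and contains $\rho$. Its presentation for $B\mu_2^n$ (compare \cite{GregersenRognes}) has generators $x_i$ of bidegree $(1,1)$ and $y_i$ of bidegree $(2,1)$, subject to the relation $x_i^2=\tau y_i+\rho x_i$. The monomials $\sigma(z_k)$ involve each $x_i$ at most once and arbitrary powers of the $y_i$, so they are well-defined elements of $M_n(K)$ whatever relation governs $x_i^2$.

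The classes $x_i$ and $y_i$ are pulled back from the classifying space over $\Spec\mathbb Z[1/2]$, or equivalently over the prime field. They are therefore natural under base change, so $b_{\overline K/K}(x_i)=x_i$ and $b_{\overline K/K}(y_i)=y_i$. Since $b_{\overline K/K}$ is a ring homomorphism (pullback in motivic cohomology), it sends $\sigma(z_k)$ to the monomial with the same name. Hence $b_{\overline K/K}(u_S)=u_S$, and the bidegree $(d,*)$ is preserved.

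The main obstacle I expect is justifying this naturality of the generators $x_i,y_i$ under field extension, and confirming that the chosen generators over $K$ agree with those used in the $\overline K$ presentation. To handle it, I would define $x_i$ and $y_i$ as pullbacks along the $i$-th projection $BV_n\to B\mu_2$ of the canonical classes in $H^{1,1}$ and $H^{2,1}$ of $B\mu_2$ over the base. These are characterized as the tautological class and its Bockstein $Q_0x$, and both constructions commute with base change. Once this compatibility is in place, the argument is the two-line contrapositive described above.
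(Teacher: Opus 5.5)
Your proposal is correct and follows essentially the same argument as the paper. You base change to an algebraic closure $\overline K$, identify the image of $u_S$ with the corresponding odd-parity sum via naturality of the universal classes $x_i,y_i$, and combine Lemma~\ref{lem:field-extension-hit} with Proposition~\ref{prop:base-change-parity} to reach a contradiction. Your added justification — that $x_i$ and $y_i=Q_0x_i$ are pullbacks of the tautological class of $B\mu_2$ and its Bockstein — fills in the one sentence that the paper asserts without proof.
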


\begin{proof}
Let $\overline K$ be an algebraic closure of $K$. Under base change from
$K$ to $\overline K$, the universal classes $x_i$ and $y_i$ map to the
corresponding universal classes, and therefore $u_S$ maps to the same
odd-parity sum in $M_n(\overline K)$. By
Proposition~\ref{prop:base-change-parity}, this base-changed element is not
hit. If $u_S$ were hit over $K$, Lemma~\ref{lem:field-extension-hit} would
make its image hit over $\overline K$, a contradiction.
\end{proof}

\begin{remark}\label{rem:general-field-presentation}
Over a general field $K$ of characteristic different from $2$, the
coefficient ring $H^{*,*}(\Spec K;\Ftwo)$ need not be $\Ftwo[\tau]$, and
the class $\rho=[-1]\in H^{1,1}(\Spec K;\Ftwo)$ need not vanish. The
standard presentation has relations
\[
 x_i^2=\tau y_i+\rho x_i;
\]
see \cite{GregersenRognes}. After quotienting by $(\tau)$ one obtains
$x_i^2=\rho x_i$, rather than an exterior algebra unless $\rho=0$.
Moreover, even when $\rho=0$, nontrivial coefficient classes remain over
many fields. Through the norm-residue theorem, these classes are governed
by Milnor $K$--theory modulo $2$ and Galois cohomology
\cite{VoevodskyNormResidue}. Thus the full local parity classification is
not asserted here over an arbitrary field, including a finite field. What
does hold over every such field is the non-hit conclusion of
Proposition~\ref{prop:all-fields-nonhit}, obtained by passage to an
algebraic closure. Characteristic $2$ is excluded because the present
argument uses the mod~$2$ Steenrod algebra with $2$ invertible on the base
and the corresponding $B\mu_2$ presentation.
\end{remark}

\subsection{Binary arithmetic for the family
\texorpdfstring{$k=n-m$}{k=n-m}}

We now determine exactly when the specialization $n=2^r+1$ forces
$\beta(d)>n$ for a fixed distance $m=n-k$. The only arithmetic input is the
following complement identity.

\begin{lemma}\label{lem:binary-complement}
Let $N\ge 1$ and let $X$ be an integer satisfying $1\le X<2^N$. Then
\[
 \alpha(2^N-X)=N-\alpha(X-1).
\]
\end{lemma}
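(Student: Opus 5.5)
The plan is to reduce to the standard fact that subtracting from all ones is bitwise complement. Write $Y=X-1$, so that $0\le Y\le 2^N-2<2^N$, and rewrite the claim as
\[
 \alpha(2^N-1-Y)=N-\alpha(Y).
\]

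First expand $Y$ in binary using $N$ digits, $Y=\sum_{j=0}^{N-1}\eta_j2^j$ with $\eta_j\in\{0,1\}$. This is legitimate because $Y<2^N$. Since $2^N-1=\sum_{j=0}^{N-1}2^j$, we get
\[
 2^N-1-Y=\sum_{j=0}^{N-1}(1-\eta_j)2^j.
\]
No borrowing occurs, because each digit $1-\eta_j$ already lies in $\{0,1\}$.

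By uniqueness of binary expansions, this displayed sum is the binary expansion of $2^N-X$. Its number of ones is therefore
\[
 \sum_{j=0}^{N-1}(1-\eta_j)=N-\alpha(Y)=N-\alpha(X-1),
\]
which is the claim.

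There is no real obstacle here. The only points needing care are the boundary cases. The hypothesis $X\ge1$ ensures that $X-1\ge0$, so $\alpha(X-1)$ is defined. The hypothesis $X<2^N$ ensures that $2^N-X\ge1$ and that $X-1$ fits in $N$ digits. Note that $X=1$ gives $\alpha(2^N-1)=N$, consistent with the formula.
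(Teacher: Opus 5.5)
Your proposal is correct and follows the paper's proof essentially verbatim: both write $X-1$ with exactly $N$ binary digits and observe that $2^N-X=(2^N-1)-(X-1)$ is obtained by digitwise complementation with no borrowing, so the number of ones is $N-\alpha(X-1)$. Your explicit appeal to uniqueness of binary expansions simply makes the paper's \emph{no borrowing} step a little more formal.
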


\begin{proof}
Write $X-1$ using exactly $N$ binary digits, allowing leading zeros. Since
\[
 2^N-X=(2^N-1)-(X-1)
\]
and $2^N-1$ is represented by a string of $N$ ones, the subtraction is
digitwise and involves no borrowing: every binary digit of $X-1$ is
replaced by its complement. Hence the number of ones in $2^N-X$ is $N$
minus the number of ones in $X-1$.
\end{proof}

\begin{theorem}\label{thm:binary-general-m}
Let $m\ge 3$ and let $r\ge 1$ satisfy $2^r+1>m$. Set
\[
 n=2^r+1,
 \qquad
 k=n-m,
 \qquad
 d=(n-1)(2^{k+1}-2)+k.
\]
Then $1\le k<n$, and
\[
 \beta(d)>n
 \quad\Longleftrightarrow\quad
 r\ge m+\alpha(m-3).
\]
\end{theorem}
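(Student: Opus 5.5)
The plan is to reduce everything to Proposition~\ref{prop:alpha-beta}, which gives $\beta(d)>n\iff\alpha(d+n)>n$, and then compute $\alpha(d+n)$ exactly using Lemma~\ref{lem:binary-complement}.

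First, the range of $k$. Since $m\ge 3$, we have $k=n-m<n$. The hypothesis $2^r+1>m$ gives $k=2^r+1-m\ge 1$.

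Next, put $d+n$ in closed form. From $n-1=2^r$ we get
\[
 d=2^r(2^{k+1}-2)+k=2^{r+k+1}-2^{r+1}+k .
\]
Adding $n=2^r+1$ and using $k+1=2^r+2-m$ gives
\[
 d+n=2^{r+k+1}-2^{r+1}+2^r+k+1=2^{r+k+1}-(m-2).
\]
This is the step where an arithmetic slip is most likely, so I would check it carefully. It is also the only real content: the two powers $2^r$ cancel against $2^{r+1}$, leaving a pure ``power of two minus a small number''.

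Now apply Lemma~\ref{lem:binary-complement} with $N=r+k+1$ and $X=m-2$. Here $X\ge 1$ because $m\ge 3$. Also $X<2^N$, since $m-2<2^r\le 2^N$ (as $m\le 2^r$). The lemma gives
\[
 \alpha(d+n)=r+k+1-\alpha(m-3)=r+n-m+1-\alpha(m-3),
\]
which is the formula announced in the introduction.

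Finally, $\alpha(d+n)>n$ is equivalent to $r-m+1-\alpha(m-3)>0$. Over the integers this is the same as $r\ge m+\alpha(m-3)$. Combined with Proposition~\ref{prop:alpha-beta}, this yields the stated equivalence. No step is a genuine obstacle; the only care needed is verifying the hypotheses $1\le X<2^N$ of the complement lemma.
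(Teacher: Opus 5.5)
Your proposal is correct and follows the paper's proof essentially step for step. You reduce to $\alpha(d+n)>n$ via Proposition~\ref{prop:alpha-beta}, rewrite $d+n=2^{r+k+1}-(m-2)$, apply Lemma~\ref{lem:binary-complement} with $N=r+k+1=r+n-m+1$ and $X=m-2$ (checking $1\le X<2^N$), and conclude $r\ge m+\alpha(m-3)$ by integrality.
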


\begin{proof}
The inequality $2^r+1>m$ gives $k=n-m\ge 1$, while $m\ge 3$ gives
$k<n$. By Proposition~\ref{prop:alpha-beta}, it remains to determine
exactly when $\alpha(d+n)>n$. Substituting $k=n-m$ gives
\begin{align*}
 d+n
 &=(n-1)(2^{k+1}-2)+k+n\\
 &=(n-1)2^{n-m+1}-2(n-1)+(n-m)+n\\
 &=(n-1)2^{n-m+1}-(m-2).
\end{align*}
Because $n-1=2^r$, this becomes
\begin{equation}\label{eq:general-m-d-plus-n}
 d+n=2^{r+n-m+1}-(m-2).
\end{equation}
Put
\[
 N=r+n-m+1,
 \qquad
 X=m-2.
\]
The inequalities $m\ge 3$ and $n=2^r+1>m$ give
\[
 1\le X=m-2<n-2=2^r-1<2^N.
\]
Lemma~\ref{lem:binary-complement} therefore yields
\begin{align*}
 \alpha(d+n)
 &=N-\alpha(X-1)\\
 &=r+n-m+1-\alpha(m-3).
\end{align*}
Consequently,
\begin{align*}
 \alpha(d+n)>n
 &\Longleftrightarrow
 r-m+1-\alpha(m-3)>0\\
 &\Longleftrightarrow
 r\ge m+\alpha(m-3),
\end{align*}
where the last equivalence uses the integrality of $r$. Combining this
equivalence with Proposition~\ref{prop:alpha-beta} proves the assertion.
\end{proof}

\begin{theorem}\label{thm:systematic-counterexamples}
Let $K$ be a field of characteristic different from $2$. Let $m\ge 3$ and
$r\ge m+\alpha(m-3)$, and set
\[
 n=2^r+1,
 \qquad
 k=n-m,
 \qquad
 d=(n-1)(2^{k+1}-2)+k.
\]
Then $\beta(d)>n$ and $QM_n^{d,*}(K)\ne 0$. More precisely, for every
nonempty subset $S\subset\mono(k)$ of odd cardinality, the element
\[
 u_S=\sum_{\sigma\in S}\sigma(z_k)\in M_n(K)_{d,*}
\]
represents a nonzero class in $QM_n^{d,*}(K)$.
\end{theorem}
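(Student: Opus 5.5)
The argument combines Theorem~\ref{thm:binary-general-m} with Proposition~\ref{prop:all-fields-nonhit}. No new machinery is needed; the work is to check that the hypotheses of those two results are met.

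\textbf{Step 1: the hypotheses of Theorem~\ref{thm:binary-general-m}.} That theorem requires $2^r+1>m$. Since $\alpha(m-3)\ge 0$, the assumption $r\ge m+\alpha(m-3)$ gives $r\ge m$. Then $2^r\ge r+1>m$, using $2^r\ge r+1$ for all $r\ge 0$. Hence $2^r+1>m$, and Theorem~\ref{thm:binary-general-m} applies. It gives $1\le k<n$ and, through the equivalence, $\beta(d)>n$. The bound $1\le k<n$ matters because it places us in the range where Kameko's monomials $z_k$, the set $\mono(k)$ and the basis $\pi(M_0\cup M_1)$ are defined. We also have $n\ge 2$.

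\textbf{Step 2: non-hit classes.} For $n$ and $k$ as above and any $S\subset\mono(k)$ of odd cardinality, Proposition~\ref{prop:all-fields-nonhit} shows that $u_S$ represents a nonzero class in $QM_n^{d,*}(K)$ over any field $K$ with $\operatorname{char}(K)\ne 2$. That proposition rests on two earlier results. Proposition~\ref{prop:base-change-parity} gives the statement over $\overline K$ via Theorem~\ref{thm:local-parity} and Lemma~\ref{lem:tau-contrapositive}. Lemma~\ref{lem:field-extension-hit} then descends it to $K$. The degree $d$ in the present statement coincides with $d=k+2d_1$ from the local parity setup, because $(n-1)(2^{k+1}-2)+k=k+2(n-1)(2^k-1)$.

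\textbf{Step 3: nonvanishing of $QM_n^{d,*}(K)$.} Taking $S=\{\sigma\}$ for a single $\sigma\in\mono(k)$, which is nonempty since $k\le n$, produces a nonzero class. Hence $QM_n^{d,*}(K)\ne 0$.

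No step is a genuine obstacle. The only point needing care is the inequality $2^r+1>m$ in Step 1, since Theorem~\ref{thm:binary-general-m} lists it as a hypothesis rather than deriving it.
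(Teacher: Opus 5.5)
Your proposal is correct and follows the paper's proof: $\beta(d)>n$ comes from Theorem~\ref{thm:binary-general-m}, the non-vanishing of every odd-parity $u_S$ comes from Proposition~\ref{prop:all-fields-nonhit}, and a singleton $S$ gives $QM_n^{d,*}(K)\ne 0$. You also check explicitly that $r\ge m+\alpha(m-3)$ forces $2^r+1>m$, which is the hypothesis of Theorem~\ref{thm:binary-general-m}, and that $d=k+2d_1$; the paper leaves both points implicit.
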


\begin{proof}
The inequality $\beta(d)>n$ is
Theorem~\ref{thm:binary-general-m}. The non-vanishing of every odd-parity
sum follows from Proposition~\ref{prop:all-fields-nonhit}. Taking $S$ to
be a singleton gives $QM_n^{d,*}(K)\ne 0$.
\end{proof}

\begin{corollary}\label{cor:fixed-m-infinite}
For every fixed integer $m\ge 3$ and every field $K$ of characteristic
different from $2$, there are infinitely many pairs $(n,d)$ satisfying
\[
 \beta(d)>n
 \qquad\text{and}\qquad
 QM_n^{d,*}(K)\ne 0,
\]
with $n=2^r+1$ and $k=n-m$.
\end{corollary}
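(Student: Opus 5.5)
The corollary follows once we check that Theorem~\ref{thm:systematic-counterexamples} applies to infinitely many values of $r$, and that distinct $r$ give distinct pairs $(n,d)$.

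Fix $m\ge 3$ and the field $K$. For every integer $r$ with $r\ge m+\alpha(m-3)$, put $n=2^r+1$, $k=n-m$ and $d=(n-1)(2^{k+1}-2)+k$. The condition of Theorem~\ref{thm:binary-general-m} that $2^r+1>m$ then holds automatically. Indeed, $r\ge m$ gives $2^r\ge 2^m>m$, so $2^r+1>m$ and $k\ge 1$. Theorem~\ref{thm:systematic-counterexamples} therefore applies to each such $r$. It gives $\beta(d)>n$, and, taking $S$ to be a singleton, it gives $QM_n^{d,*}(K)\ne 0$.

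It remains to see that infinitely many distinct pairs arise. The admissible values of $r$ form the infinite set $\{r\in\mathbb Z\mid r\ge m+\alpha(m-3)\}$. The map $r\mapsto n=2^r+1$ is strictly increasing, so distinct $r$ give distinct $n$, and hence distinct pairs $(n,d)$. For completeness, $d$ is also strictly increasing in $r$. Since $k=2^r+1-m$ grows with $r$, the first factor $n-1$ and the factor $2^{k+1}-2$ both increase with $r$.

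There is no real obstacle here. The one point to verify is that the standing hypothesis $2^r+1>m$ of Theorem~\ref{thm:binary-general-m} follows from $r\ge m+\alpha(m-3)$, which the elementary inequality $2^m>m$ settles.
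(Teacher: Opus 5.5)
Your proposal is correct and follows the paper's proof: apply Theorem~\ref{thm:systematic-counterexamples} to each of the infinitely many integers $r\ge m+\alpha(m-3)$. The paper leaves two details implicit that you supply correctly, namely that the hypothesis $2^r+1>m$ follows from $r\ge m$, and that distinct $r$ give distinct $n$ and hence distinct pairs.
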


\begin{proof}
Every integer $r\ge m+\alpha(m-3)$ gives such a pair by
Theorem~\ref{thm:systematic-counterexamples}, and there are infinitely many
such integers $r$.
\end{proof}

\begin{remark}\label{rem:special-m}
For $m=3$, one has $\alpha(m-3)=\alpha(0)=0$, so the condition is $r\ge 3$.
Moreover, $n-2=2^r-1$ has exactly $r$ ones in its binary expansion. Thus
this case recovers the subfamily $n=2^r+1$ of Kameko's original
$k=n-3$ family \cite{KamekoMotivic}. For $m=4$, one has $\alpha(m-3)=1$, and the condition is
$r\ge 5$, recovering the family $k=n-4$ established by the special
calculation $d+n=2^{r+n-3}-2$. For every $m\ge 5$, Theorem~\ref{thm:systematic-counterexamples} supplies an additional fixed-distance family.
\end{remark}

\end{document}